\newtheorem{theorem}{Theorem}[section]
\newtheorem{definition}{Definition}[section]
\newtheorem{lemma}[theorem]{Lemma}
\newtheorem{remark}[theorem]{Remark}
\newtheorem*{theorem*}{Theorem}
\numberwithin{equation}{section}
\DeclareMathOperator{\diff}{d\!}
\begin{document}

\title[Kunita-It\^o-Wentzell formula for $k$-forms in stochastic fluid dynamics]{ Implications of Kunita-It\^o-Wentzell formula for $k$-forms\\ in stochastic fluid dynamics }

\author[]{Aythami Bethencourt de L\'eon, Darryl Holm, Erwin Luesink, So Takao}
\address{Mathematics Department, Imperial College London}
\email{a.bethencourt-de-leon15@ic.ac.uk,d.holm@ic.ac.uk}
\email{e.luesink16@ic.ac.uk,st4312@ic.ac.uk}

\maketitle

\begin{abstract}
We extend the It\^o-Wentzell formula for the evolution of a time-dependent stochastic field along a semimartingale to $k$-form-valued stochastic processes. The result is the Kunita-It\^o-Wentzell (KIW) formula for $k$-forms. We also establish a correspondence between the KIW formula for $k$-forms derived here and a certain class of stochastic fluid dynamics models which preserve the geometric structure of deterministic ideal fluid dynamics. This geometric structure includes  Eulerian and Lagrangian variational principles, Lie--Poisson Hamiltonian formulations and natural analogues of the Kelvin circulation theorem, all derived in the stochastic setting.

\end{abstract}

\setcounter{tocdepth}{1}
\tableofcontents

\section{Introduction}

\paragraph{\bf Purpose of this paper.} This paper aims to derive stochastic partial differential equations (SPDEs) for continuum dynamics with stochastic advective Lie transport (SALT). These derivations require stochastic counterparts of the deterministic approaches for deriving fluid equations (PDEs). The approach we follow is the stochastic counterpart of the  Euler--Poincar\'e variational principle as in \cite{holm1998euler} which reveals the geometric structure of deterministic ideal fluid dynamics. Our goal is to create stochastic counterparts which preserve this geometric structure. Such variational formulations of stochastic fluid PDEs will also possess auxiliary SPDEs for stochastic advection of material properties which will correspond to various differential $k$-forms. The auxiliary SPDE for advective transport of a given material property by a stochastic fluid flow corresponds to a type of stochastic ``chain rule" in which a $k$-form-valued semimartingale $K(t,x)$ is evaluated (via pull-back) along a stochastic flow $\phi_t$. The stochastic aspect of the flow map $\phi_t$ represents uncertainty
in the Lagrange-to-Euler map for the fluid. Examples of the $k$-form valued process $K(t,x)$ advected by the pull-back  $\phi_t^*K$ of the stochastic flow map $\phi_t$  include mass density, regarded as a volume form, and magnetic field, interpreted as a two-form for ideal magnetohydrodynamics (MHD) \cite{holm1998euler}. 
%
We will refer to this ``chain rule" that gives us the auxiliary SPDE for SALT satisfied by the $k$-form-valued process $\phi_t^* K$ as  the \emph{Kunita--It\^o--Wentzell (KIW) formula} for $k$-forms.

In \cite{kunita1981some,kunita1997stochastic}, Kunita derived an extension of It\^o's formula showing that if 
$K$ is a sufficiently regular $(l,m)$-tensor and $\phi_{t}$ is the flow of the SDE 
\begin{align} \label{phi-map-intro}
\diff \phi_{t}(x) = b(t,\phi_{t}(x)) \diff t + \xi(t,\phi_{t}(x)) \circ \diff B_t
\,,
\end{align}
with sufficently regular coefficients, then an analogue of It\^o's formula holds for tensor fields, namely
\begin{align} 
&\phi_{t}^* K(t,x)-K(0,x) = \int_0^t \phi_{s}^* (\mathcal{L}_{b} K)(s,x) \diff s
+  \int_0^t \phi_{s}^* (\mathcal{L}_{\xi} K)(s,x) \circ \diff B_s\,.
\label{ito-second-1}
\end{align}
Here, $\circ \diff B_s$ denotes Stratonovich integration with respect to the Brownian motion $B_s,$ which is defined with respect to the standard probability space and filtration. In addition, $\phi_t^*(\,\cdot\,)$ denotes the pullback with respect to the map $\phi_t$ and $\mathcal L_b$ denotes the Lie derivative with respect to the vector field $b$. In \cite{krylov2011ito}, Krylov considered an approach using mollifiers to provide a general proof of the classical It\^o-Wentzell formula  \cite{kunita1981some,Bismut1981-ItoFormula,kunita1997stochastic}. This classical formula states that for a sufficiently smooth scalar function-valued semimartingale $f$, represented as
\begin{align} \label{Ito-Wentzell-intro1}
\diff \!f(t,x) =  g(t,x)\diff t +  h(t,x) \circ \diff W_t
\,,\end{align}
the equation describing the evolution of the function $f$ via pull-back by $\phi_t$ as $\phi_t^*f := f \circ \phi_t$ reads
\begin{align} \label{Ito-Wentzell-intro}
\begin{split}
    \phi_t^*f(t,x) &=  f(0,x) + \int^t_0 \phi_s^* g(s,x) \,\diff s +   \int^t_0 \phi_s^* h(s,x) \circ \diff W_s \\
    &+ \int^t_0 \phi_s^* \big( b \cdot \nabla f \big)(s,x) \,\diff s 
    + \int^t_0 \phi_s^* \big( \xi \cdot \nabla f \big)(s,x) \circ \diff B_s\,,
\end{split}
\end{align}
where $\phi_t$ is the flow of the SDE in \eqref{phi-map-intro}. For more a precise statement of the regularity conditions, see \cite{krylov2011ito}. In the present paper, we derive the Kunita-It\^o-Wentzell Theorem which establishes the formula for the evolution of a $k$-form-valued process $\phi_t^*K$. This result generalises Kunita's formula \eqref{ito-second-1} and the It\^o--Wentzell formula for a scalar function \eqref{Ito-Wentzell-intro} by allowing $K$ to be any smooth-in-space, stochastic-in-time $k$-form on $\mathbb{R}^n$. Omitting the technical regularity assumptions provided in the more detailed statement of the theorem in Section \ref{sec-KIW-for-k-forms}, we now state a simplified version of our main theorem, as follows.

\begin{theorem*}[Kunita-It\^o-Wentzell formula for $k$-forms, simplified version]
Consider a sufficiently smooth $k$-form $K(t,x)$ in space which is a semimartingale in time
\begin{align} \label{spde-compact}
\diff K(t,x) = G(t,x) \diff t + \sum_{i=1}^M H_i(t,x) \circ \diff W_t^i,
\end{align}
where $W_t^i$ are i.i.d. Brownian motions. Let $\phi_t$ be a sufficiently smooth flow satisfying the SDE 
\[
\diff \phi_{t}(x) = b(t,\phi_{t}(x)) \diff t + \sum_{i=1}^N \xi_i(t,\phi_{t}(x)) \circ \diff B_t^i
\,,\]
in which  $B_t^i$ are i.i.d. Brownian motions. Then the pull-back $\phi_t^* K$ satisfies the formula
\begin{align}
\begin{split}
\diff \,(\phi_t^* K)(t,x) = &\phi_t^* G(t,x) \diff t + \sum_{i=1}^M \phi_t^* H_i(t,x) \circ \diff W_t^i \\
&+  \phi_t^*\mathcal L_b K(t,x) \diff t  + \sum_{i=1}^N \phi_t^* \mathcal L_{\xi_i} K(t,x) \circ \diff B_t^i.
\label{KIWkformsimplified}
\end{split}
\end{align}
\end{theorem*}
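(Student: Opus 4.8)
The plan is to reduce the statement for general $k$-forms to the scalar It\^o--Wentzell formula \eqref{Ito-Wentzell-intro}, which we may take as known, by exploiting the naturality of every operation that appears: the pullback $\phi_t^*$, the spatial exterior derivative $d$, the wedge product, and the Lie derivative. Concretely I would argue by a short bootstrap: establish \eqref{KIWkformsimplified} first for $0$-forms, then for exact $1$-forms, and then propagate it through wedge products, using that an arbitrary smooth $k$-form semimartingale can be written in coordinates as a finite sum $\sum_I K_I\, dx^{i_1}\wedge\cdots\wedge dx^{i_k}$ with scalar semimartingale coefficients $K_I$.

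For a $0$-form $K=f$, equation \eqref{KIWkformsimplified} \emph{is} \eqref{Ito-Wentzell-intro} once we read $\mathcal L_b f = b\cdot\nabla f$ and $\mathcal L_{\xi_i} f = \xi_i\cdot\nabla f$. For an exact $1$-form $K=df$, I would apply the spatial exterior derivative $d$ to the integrated scalar formula for $\phi_t^* f$ and invoke three facts: $d$ commutes with the pullbacks $\phi_s^*$; $d$ commutes with $\mathcal L_b$ and $\mathcal L_{\xi_i}$ by Cartan's identity $d\mathcal L_v = \mathcal L_v d$; and $d$ may be taken inside the Lebesgue and Stratonovich time-integrals. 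Differentiating $\diff f = g\,\diff t + \sum_i h_i\circ\diff W^i$ in $x$ identifies the drift and noise coefficients of $K=df$ as $dg$ and $dh_i$, so the outcome is precisely \eqref{KIWkformsimplified} for $K=df$; taking $f$ equal to a coordinate function (a deterministic, constant-in-time semimartingale) gives in particular the formula for $K=dx^j$, with $\phi_t^* dx^j = d\phi_t^j$. Finally, if \eqref{KIWkformsimplified} holds for a $p$-form $K$ and a $q$-form $L$, then from $\phi_t^*(K\wedge L)=(\phi_t^*K)\wedge(\phi_t^*L)$, the Stratonovich product rule $\diff(A\wedge C)=(\diff A)\wedge C + A\wedge(\diff C)$ applied to $A=\phi_t^*K$, $C=\phi_t^*L$, the Leibniz rule $\mathcal L_v(K\wedge L)=(\mathcal L_v K)\wedge L + K\wedge(\mathcal L_v L)$, and the analogous product formulas for the drift and noise coefficients of $K\wedge L$ (themselves the Stratonovich product rule applied pointwise in $x$), one obtains \eqref{KIWkformsimplified} for $K\wedge L$; linearity then closes the induction and gives the general case.

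The main obstacle is analytic rather than algebraic: justifying that spatial derivatives may be moved through the Stratonovich integrals, and more basically that the scalar formula holds uniformly enough in $x$ for the resulting object to be a genuine $k$-form-valued semimartingale. This is where the ``sufficiently smooth'' hypotheses -- boundedness of enough spatial derivatives of $G, H_i, b, \xi_i$ together with suitable integrability -- will be used; I would handle it either by a stochastic Fubini argument combined with a Kolmogorov-continuity estimate, or, following Krylov's treatment of \eqref{Ito-Wentzell-intro}, by first mollifying $K$ in space, proving the identity for the mollifications where the exchange is elementary, and passing to the limit using the regularity bounds. As an independent check I would also run the direct coordinate computation: apply the scalar It\^o--Wentzell formula to each $K_I\circ\phi_t$, compute the linear matrix SDE for the Jacobian $D\phi_t$ by differentiating the flow SDE in $x$, and multiply out $\phi_t^* K = \sum_I (K_I\circ\phi_t)\, d\phi_t^{i_1}\wedge\cdots\wedge d\phi_t^{i_k}$ with the Stratonovich product rule; the merit of the naturality argument is that it confines all cross-variation bookkeeping between the families $W^i$ and $B^j$ to the already-established scalar statement.
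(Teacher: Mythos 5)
Your proposal is correct in outline but follows a genuinely different route from the paper. You reduce the $k$-form statement to the scalar It\^o--Wentzell formula \eqref{Ito-Wentzell-intro} (taken as known from Krylov) and then bootstrap by naturality: $0$-forms, then $dx^j$ via the Jacobian SDE (equivalently, commuting $d$ with the scalar formula), then wedge products via the Stratonovich product rule together with the Leibniz rule for $\mathcal L_v$, closing with linearity over the coordinate decomposition $K=\sum_I K_I\,dx^{i_1}\wedge\cdots\wedge dx^{i_k}$. The paper instead gives a single self-contained mollifier argument in the style of Krylov: it sets $F^\epsilon_t(x,y)=\rho^\epsilon(y-\phi_t(x))\langle K(t,y),(\phi_t)_*\boldsymbol{u}(\phi_t(x))\rangle$ for arbitrary vector fields $u_1,\dots,u_k$, applies It\^o's product rule to the triple product of mollifier, components $K_{i_1\dots i_k}$, and Jacobian factors, integrates in $y$ using a stochastic Fubini theorem, integrates by parts to unload derivatives from $\rho^\epsilon$, and lets $\epsilon\to0$; this proves the It\^o version \eqref{Ito-Wentzell-one-form-Ito-ver} first, with its explicit cross-variation and $\frac12\mathcal L_{\xi}\mathcal L_{\xi}K$ corrections, and then deduces the Stratonovich form. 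What your route buys is conceptual economy --- all hard analysis is quarantined in the scalar case and the algebra is manifestly coordinate-free; what the paper's route buys is a one-pass proof with sharp, explicit regularity hypotheses and the It\^o correction terms for free. Two points deserve emphasis if you pursue your version: (i) the Stratonovich wedge-product step uses associativity of the Stratonovich integral, which requires the integrands $\phi_t^*H_i$ and $\phi_t^*\mathcal L_{\xi_j}K$ to be continuous semimartingales in their own right --- this forces you to assume that $H_i$ has its own semimartingale decomposition and that $K$ is one degree smoother than the scalar case suggests, exactly as in the paper's Theorem \ref{thm:IW-one-form-Strat-ver} (where $K\in C^3$ and the $H_i$ are themselves It\^o processes); and (ii) differentiating the flow SDE in $x$ and passing $d$ through the stochastic integrals is where Kunita's stochastic-flow regularity (or, as you suggest, a mollification plus stochastic Fubini argument --- which is in effect the paper's proof) must be invoked, so the analytic content is not actually avoided, only relocated.
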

Formulas \eqref{spde-compact} and \eqref{KIWkformsimplified} are compact forms of equations \eqref{K-eq-strat} and \eqref{Ito-Wentzell-one-form-Strat-ver} in Section \ref{sec-KIW-for-k-forms}. The latter equations are written in integral notation to make the stochastic processes more explicit.
\begin{remark}\rm
In applications, we will sometimes express \eqref{KIWkformsimplified} using the differential notation
\begin{align*}
\diff \,(\phi_t^* K)(t,x) = \phi_t^*\left(\diff K + \mathcal L_{\diff x_t} K\right)(t,x),
\end{align*}
where $\diff x_t$ is the stochastic vector field $\diff x_t(x) = b(t,x) \diff t + \sum_{i=1}^N \xi_i(t,x) \circ \diff B_t^i$.
This formula is also valid when $K$ is a vector field rather than a $k$-form.
\end{remark}

A quick comparison of the It\^o--Wentzell formulas in \eqref{Ito-Wentzell-intro} and \eqref{KIWkformsimplified} shows the parallels and differences between the scalar and $k$-form cases. Our proof of this theorem relies on a slight extension of Krylov's mollifier approach in \cite{krylov2011ito}. Our proof uses mollifiers to evaluate the time-dependent $k$-form $K(t,x)$ along the flow $\phi_t$ without having to discretise the time and take limits, as is usually done.
The result for deterministic, smooth-in-time $K$ is already available in \cite{kunita1984stochastic},
and for the particular case in which $K$ is a deterministic $k$-form-valued process, some consequences in fluid dynamics have also been discussed previously in \cite{catuogno2016stochastic,rezakhanlou2016stochastically}. In a related work, Drivas and Holm \cite{drivas2018circulation} prove the KIW theorem for one-forms in the course of proving Kelvin's circulation theorem rigorously for stochastic fluids. The approach of \cite{drivas2018circulation} converts the line integral of a one-form along a closed circulation loop to a Riemann integral by parametrising the loop and then it applies the standard It\^o-Wentzell formula. The method employed in the present work does not depend on parametrising the surface over which the integral is taken. Consequently, our mollifier approach based on \cite{krylov2011ito} allows for a natural coordinate-free generalisation of the It\^o-Wentzell formula to $k$-forms.

\subsection*{Background. }
Stochastic geometric mechanics for continuum dynamics has recently had a sequence of developments, which we now briefly sketch.  \smallskip

\paragraph{\bf  Stochastic geometric mechanics.}
In \cite{holm2015variational}, the extension of geometric mechanics to include stochasticity in nonlinear fluid theories was accomplished by applying Hamilton's variational principle, constrained by using the Clebsch approach to enforce stochastic Lagrangian fluid trajectories arising from the stochastic Eulerian vector field
\begin{equation}
\diff x_t(x,t) := u(x,t)\, \diff t + \sum_{i=1}^N \xi_i (x) \circ \diff W^i(t) \,,
\label{VF-decomp}
\end{equation}
regarded as a decomposition into a drift velocity $u(x,t)$ and a sum over independent stochastic terms. 
Imposing this decomposition as a constraint on the variations in Hamilton's  principle for fluid dynamics \cite{holm1998euler}, led in \cite{holm2015variational} to new stochastic partial differential equation (SPDE) models which serve to represent the effects of unknown, rapidly fluctuating scales of motion on slower resolvable time scales in a variety of fluid theories, and particularly in geophysical fluid dynamics (GFD).  \smallskip

\paragraph{\bf  Analytical properties of stochastic fluid equations.}

One should expect that the properties of the fluid equations with stochastic transport noise as formulated in \cite{holm2015variational} should closely track the properties of the unapproximated solutions of the fluid equations. For example, if the unapproximated model equations are Hamiltonian, then the model equations with stochastic transport noise should also be Hamiltonian, as shown in \cite{holm2015variational}. In addition, local well-posedness in regular Sobolev spaces and a Beale-Kato-Majda blow-up criterion were proved in \cite{crisan2017solution} for the stochastic model of the 3D Euler fluid equation for incompressible flow derived in \cite{holm2015variational}.  \smallskip


\paragraph{\bf  Fluid flow velocity decomposition.}
The same decomposition of the fluid flow velocity into a sum of drift and stochastic parts derived in \cite{holm2015variational} was also discovered in \cite{CoGoHo2017} to arise in a multi-scale decomposition of the deterministic Lagrange-to-Euler flow map into a slow large-scale mean and a rapidly fluctuating small scale map. Homogenisation theory was used to derive effective slow stochastic particle dynamics for the resolved mean part, thereby justifying the stochastic fluid partial differential equations in the Eulerian formulation. 
The results of \cite{CoGoHo2017} justified regarding the Eulerian vector field in \eqref{VF-decomp} as a genuine decomposition of the fluid velocity into a sum of drift and stochastic parts, rather than simply as a perturbation of the dynamics meant to model unknown effects in uncertainty quantification. This result implied that the velocity decomposition \eqref{VF-decomp} could be used in parallel with data assimilation for the purpose of reduction of uncertainty. \smallskip

\smallskip

\paragraph{\bf  The main content of this paper.}$\,$
\begin{enumerate}[$\lozenge$]
\item
Section \ref{sec-StochEPthm} uses the Kunita-It\^o-Wentzell (KIW) formula for $k$-forms as a crucial element in proving the Euler--Poincar\'e theorem and the Clebsch Hamilton's principle for deriving the equations of stochastic continuum dynamics. 
These two stochastic variational approaches each recover the stochastic transport versions of all of the deterministic continuum dynamics models with advected quantities derived in \cite{holm1998euler}. They also confirm the stochastic continuum dynamics equations  derived in \cite{holm2015variational}. The case of stochastic compressible adiabatic magnetohydrodynamics (MHD) is presented as a new illustrative example of the power of this method for continuum dynamics with a variety with forces depending on several advected $k$-forms. 

\item
Section \ref{sec-KIW-for-k-forms}  summarises our main theorem, which derives the KIW formula, thereby extending the It\^o--Wentzell formula to stochastic $k$-form-valued processes. A brief sketch of the proof is also outlined. 

\item
Section \ref{sec-consequences} explains some implications of the KIW formula for stochastic fluid dynamics. These implications include stochastic advection by Lie transport of $k$-forms, including details of the derivations of the  continuity equation and Kelvin circulation theorem for stochastic fluid flows.

\item
Section \ref{sec-proofs} carries out the detailed  proof of the KIW formula in the It\^o representation.

\item
Section \ref{sec-conclusions} concludes the paper with a brief summary and some outlook for further research. 

\end{enumerate}


\section{Stochastic continuum Euler--Poincar\'e theorem} \label{sec-StochEPthm}
Following \cite{arnold1966principe}, we consider the Lagrangian trajectories of ideal continuum flows as time-dependent curves $x_t$ on a manifold without boundary $M.$ These curves are generated by the action $x_t=\phi_t X$ of a curve on the manifold of diffeomorphisms $\phi_t$ parameterised by time $t$ such that $X=\phi_0 X$ at time $t=0$. Inspired by related results of \cite{arnaudon2014stochastic,holm2015variational,chen2015constrained},
we examine a family of stochastic paths (Lagrangian trajectories) generated by the action $x_t=\phi_t X$ of the diffeomorphism $\phi_t $  on the manifold $M$ where $\phi_t $ is stochastic and given by the Stratonovich stochastic process
\begin{align}\label{stochdiff-Lag-traj}
\diff \phi_t(X) = u(t,\phi_t(X)) {\,\diff t} + \xi(t,\phi_t(X)) \circ {\diff W_t}
\,,
\end{align}
in which notation for the probability variable $\omega$ has been suppressed, and the subscript $t$ in $\phi_t$, for example, denotes explicit time-dependence, not partial time derivative. Equation \eqref{stochdiff-Lag-traj} is written in differential notation for a Stratonovich stochastic process, as explained, e.g., in \cite{lazaro2007stochastic,cruzeiro2018momentum}. Namely, equation \eqref{stochdiff-Lag-traj} is short notation for the sum of Stratonovich stochastic integrals:
\begin{align}\label{stochdiff-Lag-int1}
x_t - X = \phi_t (X) - \phi_0(X) =  \int_0^t \circ \diff \phi_s(X) \,  = \int_0^t u_s(x_s) {\,\diff s} 
+  \int_0^t \xi(x_s) \circ {\diff W_s}
\,.
\end{align}

\subsection{Stochastic continuum Euler--Poincar\'e theorem with advected quantities}$\,$\\
In preparation for introducing a stochastic version of the Euler--Poincar\'e variational principle for deterministic continuum dynamics established in \cite{holm1998euler}, we consider next a family of smooth pathwise deformations of the action $x_t=\phi_t X$ by a \emph{second family of diffeomorphisms}. The second family of diffeomorphisms is deterministic and is parameterised by $\varepsilon$, with $\varepsilon=0$ at the identity. We take the combined action of the two diffeomorphisms to be a single two-parameter family, whose action on the flow manifold $M$ is denoted as $x_{t,\varepsilon} = \phi_{t,\varepsilon} X$, and is stochastic in time $t$ and deterministic in the parameter $\varepsilon$. Since the two parameters $t$ and $\varepsilon$ are independent, we may compute the partial derivative of either parameter, while holding the other one fixed. Moreover, since $t$ and $\varepsilon$ are independent parameters, we may take partial derivatives with respect to these parameters in either order and equate their cross derivatives. 

In this situation of two-parameter diffeomorphisms, the family of Lagrangian trajectories \eqref{stochdiff-Lag-traj} has been extended to include their deterministic deformations. This extension is expressed as,
\begin{align}\label{stochdiff-Lag-traj-var}
\diff x_{t,\varepsilon} = u_{t,\varepsilon}(x_{t,\varepsilon}) {\,\diff t} + \xi(x_{t,\varepsilon}) \circ {\diff W_t}
\,.
\end{align}
We define two time-dependent vector fields $w_t(x_t)$ and $\delta u_t(x_t) $ in terms of the following two different types of tangents of the perturbed trajectories at the identity, $\varepsilon=0$,
\begin{align}\label{w+du-vector-fields}
\begin{split}
w_t(x_t) &:= \frac{\partial }{\partial \varepsilon}(\phi_{t,\varepsilon} X) \bigg|_{\varepsilon=0}
= \frac{\partial x_{t,\varepsilon}}{\partial \varepsilon}\bigg|_{\varepsilon=0}
\,,\\
\delta u_t(x_t) &:= \frac{\partial \,u_{t,\varepsilon}(x_{t,\varepsilon=0})}{\partial \varepsilon}\bigg|_{\varepsilon=0}
\,.
\end{split}
\end{align}
The definitions of the path \eqref{stochdiff-Lag-traj-var} and its tangent vectors at the identity with respect to $\varepsilon$ in \eqref{w+du-vector-fields} lead to the following Lemma, which will be useful in proving the Euler--Poincar\'e theorem for stochastic continuum dynamics in the next subsection. 
\begin{lemma} [Velocity variations]\label{prep-lemma}
The variational vector fields $\delta u_t(x_t) $ and $w_t(x_t)$ defined in \eqref{w+du-vector-fields} satisfy the following advective transport relation:
\begin{align}\label{stochdiff-Lag-traj-lemma1}
\delta u_t(x_t) {\,\diff t} =  \diff w_t + \pounds_{\diff x_t}w_t 
=  \diff w_t + \big[\diff x_t\,,\,w_t\big] =:  \diff w_t - {\rm ad}_{\,\diff x_t} w_t 
\,.
\end{align}
\end{lemma}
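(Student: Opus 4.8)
The plan is to differentiate the flow equation \eqref{stochdiff-Lag-traj-var} with respect to the deformation parameter $\varepsilon$, and then to convert the resulting Lagrangian linearisation into an Eulerian advection relation using the It\^o--Wentzell formula. First I would write \eqref{stochdiff-Lag-traj-var} at a fixed label $X$ in integral form,
\begin{align*}
\phi_{t,\varepsilon}(X) = \phi_{0,\varepsilon}(X) + \int_0^t u_{s,\varepsilon}\big(\phi_{s,\varepsilon}(X)\big)\,\diff s + \int_0^t \xi\big(\phi_{s,\varepsilon}(X)\big) \circ \diff W_s\,,
\end{align*}
with $\varepsilon$-independent initial data $\phi_{0,\varepsilon}(X)=X$. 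By the regularity theory for stochastic flows \cite{kunita1997stochastic}, the solution is smooth in $X$ and $C^1$ in the deterministic parameter $\varepsilon$, so $w$ in \eqref{w+du-vector-fields} is well defined and $\partial_\varepsilon|_{\varepsilon=0}$ may be taken under both the $\diff s$ and the Stratonovich $\circ\,\diff W_s$ integrals. Carrying this out and using the chain rule --- the map $u_{t,\varepsilon}$ contributes an explicit term, which is $\delta u_t$ from \eqref{w+du-vector-fields} by definition, together with the implicit term $(w_t\cdot\nabla)u_t$ coming from its argument, while $\xi$ contributes only the implicit term $(w_t\cdot\nabla)\xi$ --- yields the Lagrangian linearised equation
\begin{align*}
\diff\big[w_t(x_t)\big] = \partial_\varepsilon\big|_{\varepsilon=0}\,\diff\phi_{t,\varepsilon}(X) = \big(\delta u_t(x_t) + (w_t\cdot\nabla)u_t(x_t)\big)\,\diff t + (w_t\cdot\nabla)\xi(x_t) \circ \diff W_t\,.
\end{align*}

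Next I would pass to the Eulerian picture. Since $\phi_t$ is a diffeomorphism, the recipe $w_t\big(\phi_t(X)\big) := \partial_\varepsilon|_{\varepsilon=0}\phi_{t,\varepsilon}(X)$ defines a genuine (time- and $\omega$-dependent) vector field $w_t$ on $M$, which by the previous display is a spatially smooth semimartingale in time. Applying the It\^o--Wentzell formula \eqref{Ito-Wentzell-intro} componentwise to $w_t$ evaluated along the flow $\phi_t$ of \eqref{stochdiff-Lag-traj} gives
\begin{align*}
\diff\big[w_t(x_t)\big] = (\diff w_t)(x_t) + (u_t\cdot\nabla)w_t(x_t)\,\diff t + (\xi\cdot\nabla)w_t(x_t) \circ \diff W_t\,,
\end{align*}
where $\diff w_t$ denotes the temporal (Stratonovich) differential of the field $w_t$ at a fixed spatial point, as in the statement of the lemma.

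Equating the two expressions for $\diff[w_t(x_t)]$ and cancelling this common term gives, after collecting the drift and noise parts,
\begin{align*}
\diff w_t = \delta u_t\,\diff t - \big[(u_t\cdot\nabla)w_t - (w_t\cdot\nabla)u_t\big]\,\diff t - \big[(\xi\cdot\nabla)w_t - (w_t\cdot\nabla)\xi\big]\circ\diff W_t\,,
\end{align*}
valued at $x_t$; since $\phi_t$ is a diffeomorphism, $x_t$ ranges over all of $M$ and this holds as an identity of vector fields. The bracketed expressions are exactly the drift and noise components of the Jacobi--Lie bracket $[\diff x_t, w_t] = \pounds_{\diff x_t} w_t$ for the stochastic vector field $\diff x_t = u_t\,\diff t + \xi\circ\diff W_t$, so the identity reads $\diff w_t + \pounds_{\diff x_t}w_t = \delta u_t\,\diff t$. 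The remaining two forms in \eqref{stochdiff-Lag-traj-lemma1} follow from $\pounds_{\diff x_t}w_t = [\diff x_t, w_t] = -\,{\rm ad}_{\diff x_t}w_t$.

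The main obstacle is the analytic input in the first paragraph: one needs the stochastic flow $\phi_{t,\varepsilon}$ to be regular enough --- jointly smooth in $X$ and differentiable in $\varepsilon$, with sufficient integrability --- for $w$ to exist and for $\partial_\varepsilon$ to commute with the Stratonovich integral. This is classical (Kunita's stochastic flow theory), but it is the only genuinely non-formal step; everything afterwards is the Stratonovich chain rule together with the bookkeeping that turns the Lagrangian linearisation into the Eulerian Lie-transport relation. A minor point to watch is that here the field $w_t$ and the flow $\phi_t$ are driven by the same Brownian motion, so strictly one invokes \eqref{Ito-Wentzell-intro} in the degenerate case $W=B$; this introduces no additional correction terms in the Stratonovich formulation.
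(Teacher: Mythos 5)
Your proposal is correct and takes essentially the same route as the paper: both arguments compute the two cross-derivatives $\partial_\varepsilon\big|_{\varepsilon=0}\diff\,(\phi_{t,\varepsilon}X)$ and $\diff\big[\partial_\varepsilon(\phi_{t,\varepsilon}X)\big]_{\varepsilon=0}$ --- the latter being exactly the Stratonovich chain rule / It\^o--Wentzell step you make explicit --- and equate them to produce $\delta u_t\,\diff t = \diff w_t + [\diff x_t, w_t]$. The only difference is presentational: you spell out the regularity needed to exchange $\partial_\varepsilon$ with the stochastic integrals, which the paper passes over silently.
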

\begin{remark}\rm
The advective transport relation \eqref{stochdiff-Lag-traj-lemma1} in Lemma \ref{prep-lemma} implies that the variation of the velocity vector field $\delta u_t(x_t)$ is determined by integrating the pull-back of the stochastic flow process $\phi_t$ acting on the infinitesimal deformation vector field, $w_t$, as 
\begin{align}\label{stochdiff-Lag-traj-lemma2}
\delta u_t(x_t) {\,\diff t} = \,\diff \,(\phi_t^*w_t) =  \phi_t^*(\diff w_t + \mathcal L_{\diff x_t}w_t )
\,,
\end{align}
where $x_t=\phi_t X$ and $\diff x_t$ is given in equation \eqref{stochdiff-Lag-traj}. Equation \eqref{stochdiff-Lag-traj-lemma2} is an example of the type of result which is obtained from the KIW formula in \eqref{KIWkformsimplified}, which clearly also applies for vector fields. 
\end{remark}
\begin{proof}
The proof of Lemma \ref{prep-lemma} follows from equality of cross derivatives of the smooth map $\phi_{t,\varepsilon}$ with respect to its two independent parameters, $t$ and $\varepsilon$, when the latter parameter is evaluated at the identity $\varepsilon=0$. One calculates directly that
\begin{align}\label{var-lemma-proof}
\begin{split}
\diff \left[\frac{\partial }{\partial \varepsilon}(\phi_{t,\varepsilon} X) \right]_{\varepsilon=0}
&= \diff w_t(x_t) 
+ \left[\frac{\partial w_{t,\varepsilon} }{\partial x_{t,\varepsilon}} \diff x_{t,\varepsilon}\right]_{\varepsilon=0}
= \diff w_t(x_t) 
+ \frac{\partial w_t }{\partial x_t} \cdot \diff x_{t}
\,,\\
\left[\frac{\partial }{\partial \varepsilon} \diff\,(\phi_{t,\varepsilon} X) \right]_{\varepsilon=0}
&:=
\delta u_t(x_t) dt + \left[\frac{\partial (\diff x_{t,\varepsilon})}{\partial x_{t,\varepsilon}}
\frac{\partial }{\partial \varepsilon}(\phi_{t,\varepsilon} X) 
\right]_{\varepsilon=0}
=
\delta u_t(x_t) dt + \frac{\partial(\diff x_t) }{\partial x_t}  \cdot w_t 
\,.
\end{split}
\end{align}
Taking the difference between these two equalities then yields 
equation \eqref{stochdiff-Lag-traj-lemma1} of Lemma \ref{prep-lemma}. 

\end{proof}

\begin{definition}
The operation $\diamond: V\times V^*\to \mathfrak{X}^*$ between tensor space elements $a\in V^\ast$ and
$b\in V$  produces an element of  $\mathfrak{X}({\mathcal{D}})^\ast$, a one-form density, given by
\begin{equation}\label{continuumdiamond}
\Big\langle b \diamond a, {u}\Big\rangle_\mathfrak{X}
= -\int_{\mathcal{D}} b \cdot \pounds_{u}\,a
=: \Big\langle b\,,\, -\mathcal L_{u}\,a  \Big\rangle_V
\;,
\end{equation}
where $\langle\, \cdot\,,\,\cdot\,\rangle_\mathfrak{X}$ denotes the symmetric, non-degenerate $L^2$ pairing between  vector fields and one-form densities, which are dual with respect to this pairing. Likewise, $\langle\, \cdot\,,\,\cdot\,\rangle_V$ represents the corresponding $L^2$ pairing between elements of $V$ and $V^*$. 
Also, $\mathcal L_{u}a$ stands for the Lie derivative of an element $a\in V^*$ with respect to a vector field ${u}\in \mathfrak{X}({\mathcal{D}}),$ and $b\cdot \mathcal L_{u}\,a$ denotes the contraction between elements of $V$ and elements of $V^\ast$. 
\end{definition}

For a stochastic Stratonovich path $x_t = \phi_t X$ with $\phi_t \in \operatorname{Diff}(\mathcal{D})$, let 
\begin{align}\label{stochdiff-Lag-path}
\diff x_t = u_t(x_t) {\,\diff t} + \xi(x_t) \circ {\diff W_t}
\end{align}
be its corresponding process and consider the curve $a_t$ with initial condition $a_0$
determined by the \emph{stochastic transport equation}
\begin{equation}
\diff \big(\phi_t^*a_t \big) = \phi_t^* \Big(\diff a_t + \mathcal L_{\diff{x}_t} a_t \Big) = 0,
\label{continuityequation}
\end{equation}
which is another application of the dynamical KIW formula in \eqref{KIWkformsimplified}. \smallskip

\noindent
We can now state the Stochastic Euler--Poincar\'{e} Theorem for Continua.

\begin{theorem}[Stochastic Euler--Poincar\'{e} Theorem for Continua]
\label{EPforcontinua}
Consider a stochastic Stratonovich path $x_t = \phi_t X$ with $\phi_t \in \operatorname{Diff}(\mathcal{D})$. The following two statements are equivalent:

\begin{enumerate}[(i)]
\item Hamilton's variational principle in
Eulerian coordinates
\begin{equation}\label{continuumconstrainedVP}
 \delta S :=  \delta\!\! \int_{t_1}^{t_2} l({u},a)\ dt=0
\end{equation}
holds on $\mathfrak{X}(\mathcal{D}) \times V^\ast$, using
variations of the form given in equation \eqref{stochdiff-Lag-traj-lemma1}
\begin{equation}\label{continuumvariations}
   \delta {u}{\,\diff t} = \diff w - {\rm ad}\,_{dx_t} w\,, \qquad
   \delta a = - \mathcal L_w\,a,
\end{equation}
where the vector field $w$ vanishes at the endpoints in time, $t_1$ and $t_2$.
\item The Euler--Poincar\'{e} equations for continua, namely the auxiliary advection equation \eqref{stochdiff-Lag-path} and the following equation of motion, 
\begin{equation}\label{continuumEP}
   \diff \frac{\delta l}{\delta {u}}
   = -\, \operatorname{ad}^{\ast}_{\diff x_t}\frac{\delta l}
        {\delta {u}}
   +\frac{\delta l}{\delta a}\diamond a{\,\diff t}
   =-\mathcal L_{\diff x_t} \frac{\delta l}{\delta {u}}
    +\frac{\delta l}{\delta a}\diamond a {\,\diff t}
    \,,
\end{equation}
hold, where the $\diamond$ operation given by
(\ref{continuumdiamond}) needs to be determined on a
case by case basis, since it depends on the nature of the tensor $a$.
(Recall that $\delta l/\delta {u}$ is a one-form density.)
\end{enumerate}
\end{theorem}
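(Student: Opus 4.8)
The plan is to mirror the deterministic Euler--Poincar\'e argument of \cite{holm1998euler}, with Stratonovich stochastic differentials replacing ordinary time derivatives. Assuming $l$ is Fr\'echet differentiable in its two arguments, I would start from
\[
\delta S = \int_{t_1}^{t_2}\left( \left\langle \frac{\delta l}{\delta u},\, \delta u \right\rangle + \left\langle \frac{\delta l}{\delta a},\, \delta a \right\rangle \right)\diff t ,
\]
and insert the constrained variations \eqref{continuumvariations}. For the velocity term I use $\delta u \,\diff t = \diff w - \operatorname{ad}_{\diff x_t} w$, which is exactly Lemma \ref{prep-lemma} (itself an instance of the KIW formula \eqref{KIWkformsimplified}), so that $\langle \delta l/\delta u,\delta u\rangle\,\diff t = \langle \delta l/\delta u,\diff w\rangle - \langle \delta l/\delta u, \operatorname{ad}_{\diff x_t} w\rangle$; for the advected term I use $\delta a = -\mathcal L_w a$. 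Next I would integrate by parts in time in $\int_{t_1}^{t_2}\langle \delta l/\delta u,\diff w\rangle$. The key point is that Stratonovich calculus obeys the ordinary Leibniz rule, $\diff\langle \delta l/\delta u, w\rangle = \langle \diff(\delta l/\delta u), w\rangle + \langle \delta l/\delta u,\diff w\rangle$, with \emph{no} It\^o correction, so this equals $[\langle \delta l/\delta u, w\rangle]_{t_1}^{t_2} - \int_{t_1}^{t_2}\langle \diff(\delta l/\delta u), w\rangle$ and the boundary term vanishes because $w$ vanishes at $t_1$ and $t_2$. Rewriting the remaining two pieces via the duality $\langle \mu, \operatorname{ad}_v w\rangle = \langle \operatorname{ad}^{*}_v\mu, w\rangle$ and via the definition \eqref{continuumdiamond} of $\diamond$, namely $\langle \delta l/\delta a, -\mathcal L_w a\rangle = \langle (\delta l/\delta a)\diamond a, w\rangle$, everything collects into
\[
\delta S = -\int_{t_1}^{t_2}\left\langle \diff\frac{\delta l}{\delta u} + \operatorname{ad}^{*}_{\diff x_t}\frac{\delta l}{\delta u} - \frac{\delta l}{\delta a}\diamond a\,\diff t,\ w \right\rangle .
\]

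Both implications are then read off this identity. If the equation of motion \eqref{continuumEP} holds, the bracket is zero, so $\delta S = 0$ for every $w$ vanishing at the endpoints, which is $(\mathrm{ii})\Rightarrow(\mathrm{i})$; here the auxiliary advection equation \eqref{continuityequation} (with associated stochastic flow \eqref{stochdiff-Lag-path}) is precisely the statement that $a_t$ is advected, $\phi_t^{*}a_t = a_0$, which by the KIW formula is exactly what makes $\delta a = -\mathcal L_w a$ the correct $\varepsilon$-variation of $a$, so it belongs to the common hypotheses of both statements. Conversely, if $\delta S = 0$ for all admissible $w$, then the paired one-form density must vanish, which rearranges to \eqref{continuumEP} after identifying $\operatorname{ad}^{*}_{\diff x_t}$ with $\mathcal L_{\diff x_t}$ on one-form densities; the advection equation is again inherited from the advected-quantity constraint. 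This gives $(\mathrm{i})\Rightarrow(\mathrm{ii})$.

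The step I expect to be the main obstacle is precisely this last inference --- the stochastic analogue of the fundamental lemma of the calculus of variations. The one-form-density-valued quantity paired against $w$ is itself a semimartingale, $N = N_{0}\,\diff t + \sum_{i} N_{i}\circ\diff W^{i}_{t} + \sum_{j}\widetilde N_{j}\circ\diff B^{j}_{t}$ (the $\diff B^{j}_{t}$ entering through $\operatorname{ad}^{*}_{\diff x_t}$), so vanishing of $\int_{t_1}^{t_2}\langle N, w\rangle$ for all $w$ must be upgraded to vanishing of $N$ as a semimartingale differential. I would handle this by choosing $w$ to isolate the components: taking $w$ of bounded variation and passing to expectations annihilates the stochastic integrals and forces $N_{0}=0$; then allowing $w$ to carry a genuine martingale part and exploiting the quadratic covariation extracts $N_{i}=\widetilde N_{j}=0$. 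Equivalently, one matches the $\diff t$ and the $\circ\diff W^{i}_{t}$, $\circ\diff B^{j}_{t}$ parts of the semimartingale identity separately. The only other point to watch is regularity --- enough smoothness of $l$ and of the flow $\phi_t$ for $\delta l/\delta u$ and its Stratonovich differential to be well defined and for the above integration by parts to be justified --- but this is covered by the standing assumptions.
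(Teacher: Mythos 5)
Your proposal is correct and follows essentially the same route as the paper: substitute the constrained variations \eqref{continuumvariations} into $\delta S$, integrate by parts in time using the Stratonovich product rule and the vanishing of $w$ at the endpoints, and collect terms via $\operatorname{ad}^{*}$ and the $\diamond$ pairing \eqref{continuumdiamond} to read off the equivalence. Your additional care over the stochastic fundamental lemma of the calculus of variations (separately matching the $\diff t$ and the martingale parts of the paired semimartingale) addresses a step the paper's proof leaves implicit, and is a worthwhile refinement rather than a departure.
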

\begin{proof}
The following string of equalities
shows that (i) is equivalent to (ii):
\begin{align}\label{continuumEPderivation}
\begin{split}
0 &= \delta \int_{t_1}^{t_2} l({u}, a) dt
   =\int_{t_1}^{t_2}\left(\frac{\delta l}{\delta {u}}\cdot
\delta{u} +\frac{\delta l}{\delta a}\cdot \delta a\right)dt
\\
  &= \int_{t_1}^{t_2} \left[\frac{\delta l}{\delta {u}}
  \cdot \left(\diff w 
    -\operatorname{ad}_{\,\diff{x}_t}{w}\right)
    -\frac{\delta l}{\delta a}\cdot \mathcal L_{w}\, a {\,\diff t}\right]
\\
   &= \int_{t_1}^{t_2} {w}\cdot
\left[-\,\diff\frac{\delta l}{\delta{u}}
   - \operatorname{ad}^*_{\diff x_t}\frac {\delta l}{\delta{u}}
   +\frac{\delta l}{\delta a} \diamond a {\,\diff t}\right]
   \\
   &= \int_{t_1}^{t_2} {w}\cdot
\left[-\,\diff\frac{\delta l}{\delta{u}}
   - \mathcal L_{\diff x_t}\frac {\delta l}{\delta{u}}
   +\frac{\delta l}{\delta a} \diamond a {\,\diff t}\right]\,.
\end{split}
\end{align}
In the second line of the calculation in \eqref{continuumEPderivation}, we have substituted 
the constrained variations in equation \eqref{continuumvariations}.
In the third line, we have used the product rule for the stochastic differential $(\diff\,)$ and applied  homogeneous 
endpoint conditions for $w$ under integration by parts using \eqref{stochdiff-Lag-int1}. 
\end{proof}
\noindent
The KIW formula is also essential in formulating and proving the following theorem for the Lagrange-to-Euler pull-back of the Clebsch variational principle for stochastic fluids appearing in \cite{holm2015variational}.

\begin{theorem}[Lagrange--Clebsch variational principle for stochastic continuum dynamics]$\,$
\label{stochClebschforcontinua}

Consider a cylindrically stochastic Stratonovich path $x_t = \phi_t X$ with $\phi_t \in \operatorname{Diff}(\mathcal{D})$. The following two statements are equivalent:

\begin{enumerate}[(i)]
\item 
The Clebsch-constrained Hamilton's variational principle
\begin{equation}\label{ClebschconstrainedVP}
 \delta S :=  \delta\!\! \int_{t_1}^{t_2} l ({\phi_t^*}u,{\phi_t^*}a)
 +  \Big\langle {\phi_t^*}b\,,\,{\diff}\, ({\phi_t^*}a) \Big\rangle_V \ dt=0
 \,,
\end{equation}
holds on $\mathfrak{X}(\mathcal{D}) \times V^\ast$.
\item The Euler--Poincar\'{e} equations for continua hold, in the form
\begin{align}\label{ClebschEquations}
\begin{split} 
   \diff \left( {\phi_t^*}\frac{\delta l}{\delta {u}}\right) 
   &= 
   {\phi_t^*}\left(\diff \frac{\delta l}{\delta {u}}
   +\mathcal L_{\diff x_t} \frac{\delta l}{\delta {u}}\right) 
    = {\phi_t^*}\left(\frac{\delta l}{\delta a}\diamond a\right) {\,\diff t}\,,
    \\
    \diff \big(\phi_t^*a_t \big) &= \phi_t^* \Big(\diff a_t + \mathcal L_{\diff{x}_t} a_t \Big) = 0
    \,.
\end{split}
\end{align}
\end{enumerate}
\end{theorem}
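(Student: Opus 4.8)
The plan is to prove the equivalence (i) $\Leftrightarrow$ (ii) by computing the stationarity conditions of $S$ under the three independent families of variations --- those of the Clebsch multiplier $\phi_t^* b$, of the advected quantity $\phi_t^* a$, and of the drift velocity $u$ (equivalently, of the flow $\phi_t$) --- and matching them, two at a time, against the two equations of (ii). Throughout, the KIW formula \eqref{KIWkformsimplified} is the device that permits one to commute $\diff$ past the pull-back $\phi_t^*$, exactly as in the proof of Lemma \ref{prep-lemma} and of Theorem \ref{EPforcontinua}.

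First I would vary $\phi_t^* b$. Since it appears in $S$ only through the linear pairing $\langle \phi_t^* b,\diff(\phi_t^* a)\rangle_V$, with no constraint, stationarity in $\phi_t^* b$ is equivalent to $\diff(\phi_t^* a_t) = 0$, i.e., by the KIW formula, to $\phi_t^*(\diff a_t + \mathcal{L}_{\diff x_t} a_t) = 0$; this is the second equation of (ii), and imposing it annihilates the constraint term. Next I would vary $\phi_t^* a$ with $u$ (and hence $\phi_t$) held fixed: this perturbs $l$ through its second slot and perturbs the pairing term, and applying the Stratonovich Leibniz rule to integrate by parts in time while discarding the endpoint contribution (the variation of $a$ vanishing at $t_1$ and $t_2$) produces an evolution equation for $\phi_t^* b$ whose right-hand side is assembled from $\delta l/\delta a$.

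Finally I would vary $u$ with $a$ and $b$ held fixed. This perturbs $\phi_t$, and hence $\phi_t^* u$, $\phi_t^* a$ and $\phi_t^* b$ simultaneously; here I would invoke Lemma \ref{prep-lemma} and the Remark following it to express the induced variation of the stochastic vector field $\diff x_t$, and therefore of each pulled-back field, through a single vector field $w_t$ with $w_{t_1}=w_{t_2}=0$. Collecting the coefficient of $w_t$ yields a ``momentum map'' relation expressing $\phi_t^*\frac{\delta l}{\delta u}$ in terms of $\phi_t^* b$ and $\phi_t^* a$ through the $\diamond$ operation. Taking $\diff$ of this relation, inserting the advection equation and the $\phi_t^* b$-equation from the previous step, and using the Leibniz identity $\mathcal{L}_v(b\diamond a)=(\mathcal{L}_v^{\top}b)\diamond a-b\diamond\mathcal{L}_v a$ together with the KIW formula, one assembles the first equation of (ii). Reading the same chain of equalities in reverse gives (ii) $\Rightarrow$ (i), paralleling \eqref{continuumEPderivation}.

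The step I expect to be the main obstacle is this last variation with respect to $u$: one must control how the Stratonovich flow $\phi_t$ responds to a perturbation of its drift, verify that the time-integration by parts applied to the semimartingale $\phi_t^* b$ generates no spurious correction or boundary terms, and check that the diamond and Lie-derivative identities close up in the pulled-back representation. Each of these points is precisely what the KIW formula is designed to handle; granting it, the computation follows the pattern of the deterministic Clebsch-to-Euler--Poincar\'e reduction and of the proof of Theorem \ref{EPforcontinua}.
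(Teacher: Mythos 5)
Your overall skeleton --- three independent stationarity conditions, then differentiate the momentum relation and substitute --- is exactly the paper's route, and your first two variations (in $\phi_t^*b$ giving the advection equation via KIW, and in $\phi_t^*a$ giving $\diff(\phi_t^*b)=\phi_t^*\bigl(\tfrac{\delta l}{\delta a}\diamond a\bigr)\diff t$ after time integration by parts) match \eqref{ClebschVariations} precisely. The final assembly you describe is also the paper's: apply the product rule to $\diff\bigl[(\phi_t^*b)\diamond(\phi_t^*a)\bigr]$ and insert the two stationarity conditions, as in \eqref{ClebschMotionEquation}.

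The genuine problem is your third variation. You propose to vary $u$ by perturbing the flow $\phi_t$ and to use the constrained variations of Lemma \ref{prep-lemma}, i.e.\ $\delta u\,\diff t=\diff w-\operatorname{ad}_{\diff x_t}w$ with $w$ vanishing at the endpoints, and you claim that collecting the coefficient of $w_t$ yields the algebraic momentum-map relation. It does not: after the time integration by parts that the $\diff w$ term forces, collecting the coefficient of $w_t$ produces a \emph{dynamical} (Euler--Poincar\'e-type) equation --- this is literally the computation \eqref{continuumEPderivation} of Theorem \ref{EPforcontinua}, which is the theorem you are not supposed to be re-proving. The whole point of the Clebsch multiplier $\phi_t^*b$ is that the advection constraint is enforced by the pairing term, so the velocity variation $\delta(\phi_t^*u)$ is \emph{free and arbitrary}, requiring no endpoint conditions and no integration by parts; its stationarity condition is the purely algebraic relation $\tfrac{\delta l}{\delta(\phi_t^*u)}=(\phi_t^*b)\diamond(\phi_t^*a)$, with the $\diamond$ arising from varying $u$ inside $\mathcal L_{\diff x_t}$ in the constraint via definition \eqref{continuumdiamond}. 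Your version is also internally inconsistent: you cannot hold $\phi_t^*a$ and $\phi_t^*b$ fixed as independent fields in steps one and two and then let them be dragged by the flow in step three. Replace the flow-perturbation argument by a free variation in $\phi_t^*u$ and the rest of your argument goes through as in the paper.
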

\begin{proof}
Evaluating the variational derivatives at fixed time $t$ and coordinate $X$ yields the following relations:
\begin{align}\label{ClebschVariations}
\begin{split}
\delta ({\phi_t^*}b)\ &: 0 = {\diff}\, ({\phi_t^*}a) = \phi_t^* \Big(\diff a_t + \mathcal L_{\diff{x}_t} a_t \Big),
\\
\delta ({\phi_t^*}a)\ &: 0 = -\,{\diff}\, ({\phi_t^*}b) +  {\phi_t^*}\left(\frac{\delta l}{\delta a}\diamond a\right) {\,\diff t}\,,
\\
\delta ({\phi_t^*}u)\ &: 0 =  \frac{\delta l}{\delta ({\phi_t^*}u)} - ({\phi_t^*}b)\diamond ({\phi_t^*}a)
\,.
\end{split}
\end{align}
One then computes the motion equation to be
\begin{align}\label{ClebschMotionEquation}
\begin{split}
{\diff}\,\frac{\delta l}{\delta ({\phi_t^*}u)} &= {\diff}\,({\phi_t^*}b)\diamond ({\phi_t^*}a)
+ ({\phi_t^*}b)\diamond {\diff}\,({\phi_t^*}a)
\\&=  {\phi_t^*}\left(\frac{\delta l}{\delta a}\diamond a\right)\diamond ({\phi_t^*}a) {\,\diff t}
\,.
\end{split}
\end{align}
Then, assembling the results of this computation yields the equations in \eqref{ClebschEquations}.
\end{proof}

\begin{remark}\rm
Note that the stochastic equations for continuum dynamics in Theorem \ref{EPforcontinua}  and Theorem \ref{stochClebschforcontinua} are equivalent, since the second set of resulting equations is the pull-back of the first one by the Lagrange-to-Euler map. 
\end{remark}

\begin{remark}\rm
At this point, one may proceed to recover stochastic transport versions of all of the deterministic continuum dynamics models with advected quantities derived in \cite{holm1998euler}. In doing so, one would also obtain the corresponding stochastic versions of all of their Kelvin--Noether theorems. In each case, given the Lagrangian $l:\mathfrak{X}(\mathcal{D}) \times V^\ast \rightarrow \mathbb{R}$, the Kelvin--Noether quantity is given by the circulation integral
\begin{align}\label{KN-quantity}
I(\gamma_t, {u}, a) = \oint_{\gamma_t} \frac{1}{\rho}\frac{\delta l}{\delta {u}}\,,
\end{align}
around a material loop $\gamma_t$ moving with the stochastic velocity $dx_t = u{\,\diff t} + \xi(x) \circ {\diff W_t},$ in which the quantity  ${\rho}^{-1}({\delta l}/{\delta {u}})$ is the circulation one-form integrand and the circulation integral evolves according to 
\begin{align}\label{KN-circthm}
\diff I(\gamma_t, {u}, a) 
=
\oint_{\gamma_t} \Big(\diff + \mathcal L_{\diff x_t}\Big) \left( \frac{1}{\rho}\frac{\delta l}{\delta {u}}\right)
= \oint_{\gamma_t} \frac{1}{\rho}\frac{\delta l}{\delta {a}} \diamond a \,,
\end{align}
which in the deterministic case becomes the classical Kelvin's circulation theorem. As we will see in Section \ref{sec-consequences}, the proof of Kelvin's circulation theorem for stochastic fluid dynamics is yet another application of the Kunita--It\^o--Wentzell formula in \eqref{KIWkformsimplified}. The It\^o versions of the Stratonovich stochastic equations in \eqref{continuityequation} and \eqref{continuumEP} follow by standard methods. 

The differences between the deterministic and stochastic continuum dynamics equations will always be that, while the geometric structure in each case will be preserved (including, e.g., Lie--Poisson brackets and Casimirs) the stochastic versions will introduce stochastic advection by Lie Transport (SALT). In the Lie-Poisson Hamiltonian formulations of these equations, the Hamiltonian function will be stochastic in the form
\begin{align}\label{Ham-split}
\diff H = H(m,a){\,\diff t} + \langle m\,,\, \xi(x) \rangle \,\circ {\diff W_t}
\,,
\end{align} 
where, as a result of the Legendre transformation,
\begin{align}\label{m-u-def}
m = \frac{\delta l(u,a)}{\delta{u}}
\quad\hbox{and}\quad
\frac{\delta \diff H(m,a)}{\delta{m}} = u{\,\diff t} + \xi(x) \circ {\diff W_t}
\,,
\end{align} 
the equations of motion will adopt the semidirect-product Lie Poisson form,
\begin{align}\label{SDP-LPB}
\diff F( m, a) 
= 
\Big\{ F\,,\, \diff H \Big\}
= - \left\langle (m,a) , \left[ \frac{\delta F}{\delta (m,a)}
\,,\, 
\frac{\delta \diff H}{\delta (m,a)}\right] \right\rangle,
\end{align} 
where $\langle \,\cdot\,,\,\cdot\,\rangle: \mathfrak{g}^*\times \mathfrak{g}\to \mathbb{R}$ is the $L^2$ pairing 
of the semidirect-product Lie algebra  $\mathfrak{g}$ with its dual $\mathfrak{g}^*$, and 
$[ \,\cdot\,,\,\cdot\,]: \mathfrak{g}\times \mathfrak{g}\to \mathfrak{g}$ is the Lie algebra bracket. 
This semidirect-product Lie Poisson Hamiltonian form of the equations is given in a more explicit 
matrix operator form as
\begin{align}\label{SDP-LPHamform}
\diff 
\begin{bmatrix}
m \\[5pt]  a
\end{bmatrix}
= -
\begin{bmatrix}
\big(\partial_j m_i + m_j \partial_i \big) \Box \  &\  \Box \,\diamond a 
\\[5pt]
\mathcal L_\Box  \,a   \  &\   0
\end{bmatrix}
\begin{bmatrix}
\delta \diff H/ \delta m_j \\[5pt] \delta \diff H / \delta a
\end{bmatrix}
,
\end{align} 
where $\Box$ denotes where the Lie Poisson bracket operations in \eqref{SDP-LPB} are applied. 
Note that the deterministic energy Hamiltonian $H(m,a)$ is not preserved, because in general $\{H,\diff H\}\ne0$.
The It\^o versions of the Stratonovich these stochastic fluid equations follow by standard methods. 

Lie--Poisson Hamiltonian formulations of stochastic fluid dynamics extend the finite dimensional theory of stochastic Hamiltonian systems introduced for symplectic manifolds in \cite{bismut1982mecanique} and then generalised to Poisson manifolds in \cite{lazaro2007stochastic}.  Variational integrators for stochastic motion on the Lie group $SO(3)$ were developed in \cite{bou2009stochastic}. Stochastic coadjoint motion for geometric mechanics in finite dimensions also discussed in detail in \cite{arnaudon2018stochastic,cruzeiro2018momentum}.

\end{remark}

\paragraph{\bf Example: Adiabatic compressible stochastic MHD} In the case of adiabatic
compressible stochastic magnetohydrodynamics (MHD), the action in Hamilton's
principle \eqref{continuumconstrainedVP} is given by
\begin{equation}
{S} =\int \, l(\mathbf{u},D,s,\mathbf{B}){\,\diff t} = \int \ \left(\frac{ D}{2}\
|\mathbf{u}|^2 - De(D,{s})-\frac{1}{2}|\mathbf{B}|^2\right) d^3x{\,\diff t}\,.
\label{mhdact}
\end{equation}
Here, $\mathbf{u}$ is the fluid velocity vector in equation \eqref{stochdiff-Lag-path} and $\mathbf{B}$ is the flux of the magnetic field. Geometrically, the vector $\mathbf{B}$ comprises the components of an exact two-form
\begin{equation}
\mathbf{B}\cdot d\mathbf{S} = d(\mathbf{A}\cdot d\mathbf{x}) = {\rm curl}\mathbf{A}\cdot d\mathbf{S},
\label{B-def}
\end{equation}
so that $\nabla\cdot\mathbf{B}=0$. 
The fluid's internal energy per unit mass is denoted as $e(D,{s}),$ and its dependence
on the mass density $D$ and entropy per unit mass ${s}$ is provided by the ``equation of
state", which for an isotropic medium satisfies the Thermodynamic First Law, in the form 
\begin{equation}\label{1stLaw}
de=-p\,d(1/D)+Td{s}
\,,
\end{equation}
with pressure $p(D,{s})$ and
temperature $T(D,{s})$. The variations of the Lagrangian $l$ in (\ref{mhdact}) yield
Hamilton's principle for stochastic MHD as
\begin{equation}
0 =\delta {S} = \int 
D\mathbf{u}\cdot\delta\mathbf{u}-DT\delta {s} +\left(\frac{1}{2}
|\mathbf{u}|^2 - h(p,s)\right)\delta D - \mathbf{B}\cdot\delta\mathbf{B}{\,\diff t}\, d^3x\,.
\end{equation}
The quantity $h=e+p/D$ denotes the enthalpy  per unit mass, which satisfies the thermodynamic relation 
\begin{equation}\label{enthalpy-def}
dh=(1/D)dp+Td{s}
\,,
\end{equation}
as a result of the First Law \eqref{1stLaw}.
The Euler--Poincar\'e formula in Kelvin-Noether form
(\ref{continuumEP}) yields the stochastic MHD motion equation as
\begin{equation}
\left({\diff } + \mathcal L_{\diff x_t}\right)
\left({\mathbf{u}}\cdot d\mathbf{x}\right)
- (Td{s})dt
+ \Big(\frac{ 1}{ D}\mathbf{B}\times{\rm curl}\ \mathbf{B}\cdot d\mathbf{x}\Big)dt
- \left(d\Big(\frac{1}{2} |\mathbf{u}|^2 - h\Big)\right)dt = 0\,,
\label{SALT-MHD-motion-eqn}
\end{equation}
or, in three dimensional vector form,  
\begin{equation}
{\diff }\,\mathbf{u} + ({\diff \mathbf{x}_t}\cdot\nabla)\mathbf{u} + (\nabla \mathbf{u})^T\cdot {\diff \mathbf{x}_t}
+ \Big(\frac{ 1}{ D}\nabla p\Big)dt
+\Big(\frac{ 1}{ D}\mathbf{B}\times{\rm curl}\ \mathbf{B}\Big)dt = 0\,.
\end{equation}
where 
\begin{equation}\label{vec-Lag-traj}
\diff \mathbf{x}_t := \mathbf{u}(t,\mathbf{x}_t){\,\diff t} + \boldsymbol{\xi}(\mathbf{x}_t)\circ {\diff W_t}
\end{equation}
is the stochastic Lagrangian trajectory.

By definition, the advected variables $\{{s}, \mathbf{B}, D\}$ satisfy the following Lie-derivative relations which close the ideal MHD
system, by applying the KIW formula for the advective dynamics, 
\begin{align}
\begin{split}
\left({\diff }\,+ \mathcal L_{\diff x_t}\right) {s}=0, 
&\quad{\rm or}\quad
{\diff }\,s
= -\ {\diff \mathbf{x}_t}\cdot\nabla\,{s}\,,
\\[5pt]
\left({\diff }\,+ \mathcal L_{\diff x_t}\right)(\mathbf{B}\cdot d\mathbf{S}) = 0, 
&\quad{\rm or}\quad
{\diff }\,\mathbf{B}
=  {\rm curl}\,({\diff \mathbf{x}_t}\times\mathbf{B}),
\\[5pt] 
\left({\diff }\,+ \mathcal L_{\diff x_t}\right) (D \,d^3x) = 0\,,
&\quad{\rm or}\quad
{\diff }\,D
= -\ \nabla\cdot(D\,{\diff \mathbf{x}_t})\,,
\end{split}
\label{MHD-SALT-eqns}
\end{align}
and the pressure is a function $p(D,{s})= D^2\partial e/\partial D$ specified by giving
the equation of state of the fluid, $e=e(D,{s})$. If the divergence-free condition
$\nabla\cdot\mathbf{B}=0$ holds initially, then it holds for all
time; since this constraint is preserved by the stochastic advection equation
for $\mathbf{B}$. 

The Stratonovich equations \eqref{SALT-MHD-motion-eqn} - \eqref{MHD-SALT-eqns} for stochastic MHD preserve several integral quantities, provided ${\diff \mathbf{x}_t}$ and $\mathbf{B}$ both have no normal components on the  boundary. Two of these are magnetic helicity and entropy 
\begin{align}
\Lambda_{mag} = \int \mathbf{B} \cdot {\rm curl}^{-1}\mathbf{B} \, d^3x,
\qquad
{\mathcal S} = \int D \Phi(s) \, d^3x
\,.
\label{MHD-maghelicity}
\end{align}
The two preserved integral quantities $\Lambda_{mag}$ and ${\mathcal S}$ in \eqref{MHD-maghelicity} are Casimir functions for the Lie--Poisson bracket in \eqref{SDP-LPHamform}. This means that they are preserved for every Hamiltonian. An additional conservation law exists in the special cases of isentropic $(\nabla s = 0)$ and isothermal flow $(\nabla T = 0)$ stochastic MHD. The additional conserved quantity $\Lambda_X=\int \mathbf{u} \cdot \mathbf{B} \, d^3x$ is the called the ``cross helicity'' and its stochastic evolution satisfies 

\begin{align}
\diff \Lambda_X = \diff \int \mathbf{u} \cdot \mathbf{B} \, d^3x = \int T\,\mathbf{B} \cdot \nabla s \, d^3x \,,
\label{MHD-maghelicity}
\end{align}
for the stochastic Hamiltonian
\[
 \diff H =  \int \ \left(\frac{ 1}{2D} |\mathbf{m}|^2 + De(D,{s}) + \frac{1}{2}|\mathbf{B}|^2\right) d^3x{\,\diff t}
 +
 \int  \mathbf{m} \cdot \boldsymbol{\xi}(\mathbf{x}) \circ {\diff W_t},
\]
subject to the First Law \eqref{1stLaw}.

\subsection*{Distinctions of the present approach from other approaches}
The results in this section are distinct from the related results of \cite{arnaudon2014stochastic}, \cite{holm2015variational} and \cite{chen2015constrained} in many ways. The closest relation of the present work is with \cite{holm2015variational}, since Theorem \ref{EPforcontinua} does in fact recover all of the equations derived in \cite{holm2015variational} from the Clebsch constrained variational approach in the Eulerian representation. However, the variational approach in \cite{holm2015variational} is purely Eulerian, while the present approach deals directly with stochastic Lagrangian trajectories. The results of \cite{arnaudon2014stochastic} and \cite{chen2015constrained} may be regarded as similar in spirit to the present work, because of their variational basis in the Lagrangian fluid description. However, (i) the objectives of the latter two papers differ from the present work, (ii) they use different variational procedures and (iii) they use different Lagrangians. All three of these differences lead to different dynamical equations from those derived in the present approach. First, the objectives of \cite{chen2015constrained} are to derive Navier--Stokes PDE, while we are deriving SPDE which preserve the geometric properties of deterministic ideal fluid dynamics. Second, the variation $x_{t,\epsilon}$ in \cite{chen2015constrained} is given by a composition of maps $e_t^\epsilon(x_t )$ which does not solve a stochastic differential equation (SDE), while the variations here are defined as two-parameter smooth maps $x_{t,\varepsilon} = \phi_{t,\varepsilon} X$ which satisfy the SDE in equation \eqref{stochdiff-Lag-traj-var}. Moreover, the Lagrangian trajectories in \cite{chen2015constrained} have fixed amplitude which would correspond to the special case $\partial_x\xi=0$ for the velocity vector field decomposition in our equation \eqref{VF-decomp}. In addition, the drift velocity $u$ is not determined in \cite{chen2015constrained} from stationarity under arbitrary variations, $\delta u$. Third, the paper \cite{chen2015constrained} and the present work choose different Lagrangians. Namely, paper \cite{chen2015constrained} chooses stochastic Lagrangian functionals whose variations lead to stochastic momenta, while the Lagrangian functionals in the present work are the same as in the deterministic case and the variations of the Lagrangian particle trajectories in equation \eqref{stochdiff-Lag-traj-var} are stochastic. The result is that in \cite{chen2015constrained} the variational derivatives produce stochastic momenta, whereas for us the Lagrangian paths which produce advective transport are stochastic. Thus, the difference in the choice of Lagrangians also leads to different dynamics. 

Two other prominant recent approaches to stochastic fluid dynamics which differ from the present work include that of Mikulevicious and Rozovskii \cite{mikulevicius2004stochastic,mikulevicius2005global} and those of M\'emin \textit{et al.}\cite{memin2014fluid, resseguier2017geophysical1, resseguier2017geophysical2, resseguier2017geophysical3, resseguier2017stochastic}. These two separate approaches each start with Newton's Law of particle motion and introduce a stochastic Lagrangian trajectory as in \eqref{vec-Lag-traj}. However, they then take different approaches, do not invoke either Hamilton's principle, or the KIW formula. Moreover, the equations derived in these two Newtonian approaches differ both from each other and from those derived in the present approach.

\subsection*{Outlook for the rest of the paper.}
The present section has demonstrated that the variational derivation of the class of stochastic fluid dynamics equations considered here depends vitally on the KIW formula \eqref{KIWkformsimplified}. Indeed, when the Lagrangian in equation \eqref{continuumEP} is chosen to be the kinetic energy of Euler's fluid equations for incompressible flow, one recovers the 3D SPDE stochastic Euler fluid equations which have been shown in \cite{crisan2017solution} to preserve the corresponding analytical properties of their deterministic counterparts. 

The purpose of the remainder of the present paper will be to investigate some additional implications of the Kunita--It\^o--Wentzell formula for stochastic fluid dynamics and to characterise the analytical requirements under which this KIW formula is valid. 

\section{Extension of Kunita-It\^o-Wentzell formula to $k$-forms}\label{sec-KIW-for-k-forms}
We say that a $k$-form $K(x)$ is of differentiability class $C^r\left(\bigwedge^k(\mathbb R^n)\right)$ if every component $K_{i_1,\ldots,i_k}(x)$ is $r$-times differentiable. We also define the $L^p$ norm of $k$-forms by
\begin{align*}
\|K\|_{L^p} := \left(\int_{\mathbb R^n} |K(x)|^p \diff^n x\right)^{\frac1p},
\end{align*}
for $p < \infty$ and
\begin{align*}
\|K\|_{L^\infty} := \sup_{x \in \mathbb R^n} |K(x)|,
\end{align*}
if $p=\infty$, where the norm $|\cdot|$ is given by
\begin{align*}
|K(x)| := \sqrt{\delta^{i_1 j_1} \cdots \delta^{i_k j_k} K_{i_1,\ldots,i_k}(x) K_{j_1,\ldots,j_k}(x)},
\end{align*}
where $\{\delta^{ij}\}_{i,j=1,\ldots,k}$ are the components of the Euclidean cometric tensor, which is one if $i=j$ and zero if $i \neq j$. Moreover, sum over repeated indices is assumed. We say that a $k$-form $K(x)$ is of integrability class $L^p\left(\bigwedge^k(\mathbb R^n)\right)$ if $\|K\|_{L^p} < \infty$.

We now give the statement of our main theorem. Namely, we determine precise conditions under which the Kunita-It\^o-Wentzell formula in \eqref{Ito-Wentzell-one-form-Ito-ver} below holds for $k$-form-valued diffusion processes on $\mathbb R^n$.

\begin{theorem}[Kunita-It\^o-Wentzell (KIW) formula for $k$-forms: It\^o version] \label{thm:IW-one-form-Ito-ver} $\,$\\
Let $K(t,x) \in L^\infty \left([0,T]; C^2\left(\bigwedge^k(\mathbb{R}^n)\right)\right)$ be a continuous adapted semimartingale taking values in the $k$-forms
\begin{align} \label{SPDE-ito}
    K(t,x) = K(0,x) + \int^t_0 G(s,x) \,\diff s + \sum_{i=1}^M \int^t_0 H_i(s,x) \diff W_s^i, \quad t \in [0,T],
\end{align}
where $W_t^1, \ldots, W_t^M$ are i.i.d. Brownian motions, $G \in L^1\left([0,T]; C^2\left(\bigwedge^k(\mathbb R^n)\right)\right)$ and \\
$H_i \in L^2\left([0,T]; C^2\left(\bigwedge^k(\mathbb R^n)\right)\right), i=1,\ldots, M$ are $k$-form-valued continuous adapted semimartingales.
Let $\{\phi_t\}_{t \in [0,T]}$ be a continuous adapted solution of the diffusion process
\begin{align} \label{flow-eq}
    \diff \phi_t(x) = b(t,\phi_t(x)) \,\diff t + \sum_{i=1}^N \xi_i(t,\phi_t(x)) \circ \diff B_t^i, \quad \phi_0(x) = x,
\end{align}
which is assumed to be a $C^1$-diffeomorphism, where $B_t^1,\ldots,B_t^N$ are i.i.d. Brownian motions, $b(t,\cdot) \in W^{1,1}_{loc}(\mathbb R^n,\mathbb R^n),$ $\xi_i(t,\cdot) \in C^2(\mathbb R^n,\mathbb R^n), i=1,\ldots,N$ for all $t \in [0,T]$ and $\int^T_0 |b(s,\phi_s(x)) + \frac12\sum_i \xi_i \cdot \nabla \xi_i(s,\phi_s(x))| + \sum_i|\xi_i(s,\phi_s(x))|^2 \diff s < \infty$ for all $x \in \mathbb R^n$. Then, the following fomula holds.
\begin{align} \label{Ito-Wentzell-one-form-Ito-ver}
\begin{split}
    \phi_t^* K(t,x) &=  K(0,x) + \int^t_0 \phi_s^* G(s,x) \,\diff s + \sum_{i=1}^M \int^t_0 \phi_s^* H_i(s,x) \diff W_s^i \\
    &+ \int^t_0 \phi_s^* \mathcal L_b K (s,x) \,\diff s 
    + \sum_{j=1}^N \int^t_0 \phi_s^* \mathcal L_{\xi_j} K(s,x) \diff B_s^j\,, \\
&+ \sum_{i=1}^M \sum_{j=1}^N \int^t_0 \phi^*_s \mathcal L_{\xi_j} H_i(s,x) \, \diff \,[W^i, B^j]_s + \sum_{j=1}^N \frac12 \int^t_0 \phi^*_s \mathcal L_{\xi_j} \mathcal L_{\xi_j} K(s,x) \, \diff s.
\end{split}
\end{align}
\end{theorem}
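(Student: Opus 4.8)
The plan is to follow Krylov's mollifier strategy from \cite{krylov2011ito}, adapted to act componentwise on the $k$-form $K(t,x)$. The key structural observation is that the pullback $\phi_t^*K$ is, in coordinates, a sum of terms of the form $K_{j_1\cdots j_k}(t,\phi_t(x)) \, \partial_{i_1}\phi_t^{j_1} \cdots \partial_{i_k}\phi_t^{j_k}$, so the problem decomposes into (a) evaluating the time-dependent random field $K_{j_1\cdots j_k}(t,\cdot)$ along the flow $\phi_t$, which is exactly the scalar Kunita--It\^o--Wentzell situation, and (b) differentiating the flow in the spatial variable, which is governed by the linearised (first variation) SDE for $\partial_i\phi_t$. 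The Lie derivative terms $\mathcal L_b K$, $\mathcal L_{\xi_j} K$ and the Stratonovich-to-It\^o correction $\tfrac12\mathcal L_{\xi_j}\mathcal L_{\xi_j}K$ will emerge precisely from Cartan's formula organising these two contributions.

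\emph{Step 1: Mollification.} Fix a standard mollifier $\rho_\varepsilon$ on $\mathbb R^n$ and set $K^\varepsilon(t,x) := (K(t,\cdot) * \rho_\varepsilon)(x)$, which by \eqref{SPDE-ito} is again a continuous adapted $k$-form-valued semimartingale with coefficients $G^\varepsilon = G*\rho_\varepsilon$, $H_i^\varepsilon = H_i*\rho_\varepsilon$, now smooth in $x$ with all spatial derivatives controlled locally uniformly. For smooth-in-space $K^\varepsilon$ the pullback $\phi_t^*K^\varepsilon(t,x)$ is a genuine function-valued semimartingale to which the ordinary It\^o formula applies, once we also know the semimartingale decomposition of $\phi_t$ and of its Jacobian.

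\emph{Step 2: It\^o expansion of $\phi_t^*K^\varepsilon$.} First convert \eqref{flow-eq} to It\^o form, picking up the drift $\tfrac12\sum_j \xi_j\cdot\nabla\xi_j$; then derive the It\^o SDE for the Jacobian matrix $D\phi_t$ by differentiating \eqref{flow-eq} in $x$. Applying the multidimensional It\^o formula to the function $(y,A)\mapsto K^\varepsilon_{j_1\cdots j_k}(t,y)A^{j_1}_{i_1}\cdots A^{j_k}_{i_k}$ evaluated at $(y,A)=(\phi_t(x),D\phi_t(x))$ — and using \eqref{SPDE-ito} for the explicit time-dependence of $K^\varepsilon$ — produces: the $\phi_s^*G^\varepsilon$ and $\phi_s^*H_i^\varepsilon$ terms from the $dt$ and $dW^i$ parts of $dK^\varepsilon$; the $\phi_s^*\mathcal L_b K^\varepsilon$ and $\phi_s^*\mathcal L_{\xi_j}K^\varepsilon$ terms by collecting the spatial-gradient contraction with the drift/Jacobian-drift pieces and recognising Cartan's formula; the cross-variation terms $\phi_s^*\mathcal L_{\xi_j}H_i^\varepsilon\,d[W^i,B^j]_s$ from the joint quadratic variation of $dK^\varepsilon$ with $d\phi_s$; and the second-order term $\tfrac12\phi_s^*\mathcal L_{\xi_j}\mathcal L_{\xi_j}K^\varepsilon\,ds$ from the quadratic variation of $\phi_s$ against the spatial derivatives of $K^\varepsilon$ together with the It\^o drift correction in $D\phi_t$. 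The nontrivial but routine bookkeeping here is checking that all the first- and second-order spatial derivatives reassemble exactly into iterated Lie derivatives; this is where Cartan's identity $\mathcal L_X = \iota_X d + d\iota_X$ and the relation $\mathcal L_{[X,Y]} = [\mathcal L_X,\mathcal L_Y]$ are used.

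\emph{Step 3: Passage to the limit $\varepsilon\to0$.} Each term in the mollified identity is an ordinary or It\^o integral of an expression continuous in $K^\varepsilon$ and its spatial derivatives up to order two; using $K\in L^\infty([0,T];C^2)$, $G\in L^1([0,T];C^2)$, $H_i\in L^2([0,T];C^2)$, the local boundedness of $D\phi_t$ and $D\phi_t^{-1}$, and standard properties of mollifiers ($K^\varepsilon\to K$ in $C^1_{loc}$, derivatives up to order two converging locally uniformly where they exist), we pass to the limit: the drift integrals converge by dominated convergence, and the stochastic integrals converge in probability after an It\^o isometry / Burkholder--Davis--Gundy estimate combined with a localisation in $x$ (or a stopping-time argument controlling $\sup_{s\le t}|D\phi_s|$). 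This yields \eqref{Ito-Wentzell-one-form-Ito-ver}.

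\emph{Main obstacle.} The genuinely delicate point is \textbf{Step 3 under the stated low regularity of the drift}, $b(t,\cdot)\in W^{1,1}_{loc}$: the Jacobian $D\phi_t$ then solves a linear SDE whose drift coefficient $Db$ is only $L^1_{loc}$, so one cannot naively differentiate the flow, and the term $\phi_s^*\mathcal L_b K$ involves $b$ paired against $\nabla K$, which is fine, but also $K$ paired against $Db$ via the pullback of the Jacobian. Handling this requires either working with a regularised $b^\varepsilon$ as well and exploiting that the only combination appearing is the full Lie derivative (a first-order operator whose distributional meaning survives the limit), or invoking the theory of flows of Sobolev vector fields (DiPerna--Lions / Ambrosio) to make sense of $D\phi_t$; I expect the paper sidesteps the worst of this by assuming $\phi_t$ is \emph{given} as a $C^1$-diffeomorphism with the integrability hypothesis on $\int_0^T(|b\circ\phi_s| + \tfrac12\sum_i|\xi_i\cdot\nabla\xi_i\circ\phi_s|) + \sum_i|\xi_i\circ\phi_s|^2\,ds<\infty$, so that $D\phi_t$ and its inverse are automatically locally bounded along trajectories and the limiting arguments go through pathwise. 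The remaining care is purely the combinatorial verification in Step 2 that the $\xi_j$-quadratic terms collapse to $\tfrac12\mathcal L_{\xi_j}\mathcal L_{\xi_j}$ rather than some non-Lie second-order operator — this is the content that distinguishes the $k$-form case from the scalar case and must be checked index by index.
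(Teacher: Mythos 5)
Your overall architecture (mollify, expand by It\^o calculus, pass to the limit with stochastic Fubini, dominated convergence and a stopping-time localisation of $\phi_t$ and $D\phi_t$) is the right one and matches the paper's strategy, and your mollified object $K^\varepsilon(t,\phi_t(x))=(K(t,\cdot)*\rho_\varepsilon)(\phi_t(x))$ is literally the same quantity as the paper's $\int_{\mathbb R^n}\rho^\epsilon(y-\phi_t(x))K(t,y)\,\diff^n y$. However, there is a genuine gap at the centre of your Step 2: you assert that because $K^\varepsilon$ is smooth in $x$, ``the ordinary It\^o formula applies'' to $\phi_t^*K^\varepsilon(t,x)$. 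It does not. The field $K^\varepsilon(t,\cdot)$ is still an adapted semimartingale in $t$ (its time dependence is an It\^o integral against $W^i$, not an absolutely continuous path), and composing a semimartingale random field with a stochastic flow is precisely the It\^o--Wentzell problem. The ordinary It\^o formula, which requires a deterministic $C^{1,2}$ function of $(t,y,A)$, cannot produce the joint quadratic variation term $\sum_{i,j}\phi_s^*\mathcal L_{\xi_j}H_i\,\diff[W^i,B^j]_s$; invoking it here silently assumes the smooth-field case of the very theorem you are proving.

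The fix is exactly Krylov's device, which the paper implements: keep the spatial point $y$ \emph{fixed} and work with $F^\epsilon_t(x,y)=\rho^\epsilon(y-\phi_t(x))\,\langle K(t,y),(\phi_t)_*\boldsymbol u(\phi_t(x))\rangle$. For fixed $y$ every factor is an honest scalar semimartingale --- $\rho^\epsilon(y-\phi_t(x))$ by the classical It\^o formula applied to the deterministic smooth kernel, $K(t,y)$ and $H_i(t,y)$ directly from \eqref{SPDE-ito}, and the Jacobian entries $J^i_j(t,x)$ from the linearised SDE --- so the elementary It\^o product rule yields all drift, martingale and cross-variation terms rigorously. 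One then integrates over $y$, justifies the interchange with the stochastic Fubini theorem, and integrates by parts in $y$ to move the derivatives off $\rho^\epsilon$ and onto $K$ and $H_i$ (this is where $C^2$ regularity of $K$ is consumed, via the $D^2_{lm}\rho^\epsilon$ term), before sending $\epsilon\to0$. A second, smaller point: the reassembly of the second-order terms into $\tfrac12\mathcal L_{\xi_j}\mathcal L_{\xi_j}K$ is done in the paper by matching against the explicit coordinate formula for the iterated Lie derivative (Lemma \ref{lie-deriv}), not via Cartan's identity $\mathcal L_X=\iota_X d+d\iota_X$, which is not well adapted to tracking the It\^o correction in the Jacobian; your index-by-index check should be organised around that coordinate formula. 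Your remarks on the low regularity of $b$ are consistent with the paper: the hypothesis that $\phi_t$ is a given $C^1$-diffeomorphism, together with the stopping times bounding $|\phi_t(x)|$ and $\|D\phi_t(x)\|$, is what makes the limiting argument go through pathwise.
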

\begin{remark}
We note that equation \eqref{SPDE-ito} is defined as an It\^o integral but the flow equation \eqref{flow-eq} is given by a Stratonovich equation. The Stratonovich form in \eqref{flow-eq} is taken merely to simplify the expressions. Of course, one can obtain similar expressions using the flow equation in It\^o form by simply replacing $b \rightarrow b - \frac12 \xi \xi'$.
\end{remark}

We will defer the full proof of Theorem \ref{thm:IW-one-form-Ito-ver} to Section \ref{sec-proofs}. Here, we will sketch the proof in the case of scalar fields, following \cite{krylov2011ito} and illustrate how we could extend it to $k$-forms and vector fields, which we will discuss in more details in the full proof in Section \ref{sec-proofs}.

\begin{proof}[Sketch proof of Theorem \ref{thm:IW-one-form-Ito-ver} for scalar fields.]
We will only prove the case $N=M=1$ here for simplicity. Extension to more noise terms is straightforward. Let $K: \mathbb R^n \rightarrow \mathbb R$ be a scalar function satisfying the assumptions in Theorem \ref{thm:IW-one-form-Ito-ver} and let $\rho^\epsilon$ be a sequence of mollifiers. For any $t \in [0,T]$ and $x,y \in \mathbb R^n$, consider the following process
\begin{align} \label{F-eq-scalar}
F^\epsilon(t,x,y) = \rho^\epsilon(y - \phi_t(x)) K(t,y),
\end{align}
where $\phi_t$ is the flow of the SDE \eqref{flow-eq}. By It\^o's lemma, we have
\begin{align*}
\diff \rho^\epsilon(y - \phi_t(x)) = - \frac{\partial \rho^\epsilon}{\partial y^k} \circ \diff \phi_t^k(x) = - \left(b^k \frac{\partial \rho^\epsilon}{\partial y^k} + \frac12 \xi^k \frac{\partial}{\partial x^k}\left(\xi^l \frac{\partial \rho^\epsilon}{\partial y^l}\right)\right)\diff t - \xi^k \frac{\partial \rho^\epsilon}{\partial y^k} \diff B_t
\end{align*}
and by the stochastic product rule, we get
\begin{align*}
\diff F^\epsilon(t,x,y) =& \rho^\epsilon(y - \phi_t(x)) \diff K(t,y) + K(t,y) \diff \rho^\epsilon(y - \phi_t(x)) + \diff \left[\rho^\epsilon(y - \phi_\cdot(x)), K(\cdot,y)\right]_t \\
=& \left(\rho^\epsilon(y - \phi_t(x)) G(t,y) - K(t,y) \left(b^k \frac{\partial \rho^\epsilon}{\partial y^k} + \frac12 \xi^k \frac{\partial}{\partial x^k}\left(\xi^l \frac{\partial \rho^\epsilon}{\partial y^l}\right)\right)(t,\phi_t(x)) \right) \diff t \\
&+ \rho^\epsilon(y - \phi_t(x)) H(t,y) \diff W_t - K(t,y) \xi^k \frac{\partial \rho^\epsilon}{\partial y^k}(t,\phi_t(x)) \diff B_t - H(t,y) \xi^k \frac{\partial \rho^\epsilon}{\partial y^k} \diff \left[W,B\right]_t.
\end{align*}

Next, we integrate the $F^\epsilon$ equation with respect to $y$ and remove the derivatives on $\rho^\epsilon$ by integrating by parts. This step gives us
\begin{align*}
&\int_{\mathbb R^n} F^\epsilon(t,x,y) \diff^n y - \int_{\mathbb R^n} F^\epsilon(0,x,y) \diff^n y \\
&= \int^t_0 \int_{\mathbb R^n} \rho^\epsilon(y - \phi_s(x)) \left(G(s,y) + b^k \frac{\partial K}{\partial y^k} + \frac12 \xi^k \frac{\partial \xi^k}{\partial x^k} \frac{\partial K}{\partial y^l} + \frac12 \xi^k \xi^l \frac{\partial^2 K}{\partial y^k \partial y^l}\right)\diff^n y \diff s \\
&+ \int^t_0 \int_{\mathbb R^n}\rho^\epsilon(y - \phi_s(x)) \left(H(s,y) \diff W_s + \xi^k \frac{\partial K}{\partial y^k} \diff B_t\right) \diff^n y  + \int^t_0 \int_{\mathbb R^n}\rho^\epsilon(y - \phi_s(x)) \xi^k \frac{\partial H}{\partial y^k} \diff^n y \diff \left[W,B\right]_s,
\end{align*}
where we have assumed that Fubini's theorem can be applied. Now, taking the limit $\epsilon \rightarrow 0$ on both sides and using the dominated convergence theorem for It\^o integrals, we obtain
\begin{align*}
K(t,\phi_t(x)) - K(0,x) =& \int^t_0 \left(G(s,\phi_s(x)) + \mathcal L_b K(s, \phi_s(x)) + \frac12 \mathcal L_\xi \mathcal L_\xi K (s,\phi_s(x))\right) \diff s \\
&+ \int^t_0 H(s,\phi_s(x)) \diff W_s + \int^t_0 \mathcal L_\xi K(s, \phi_s(x)) \diff B_s + \int^t_0 \mathcal L_\xi H(s, \phi_s(x)) \diff \left[W,B\right]_s,
\end{align*}
as expected, where $\mathcal L_b K = b \cdot \nabla K$ is the Lie derivative for scalar fields.

To prove \eqref{Ito-Wentzell-one-form-Ito-ver} for $k$-forms (see full proof in Section \ref{sec-proofs}), we follow through a similar argument, except we adapt \eqref{F-eq-scalar} by setting
\begin{align*}
F^\epsilon(t,x,y) := \rho^\epsilon(y - \phi_t(x)) \left< K(t,y), (\phi_t)_* \boldsymbol{u}(\phi_t(x)) \right>,
\end{align*}
where $\boldsymbol{u} = (u_1,\ldots,u_k) \in \mathfrak X(\mathbb R^n)^{k}$ are $k$ arbitrary vector fields and $\left<\cdot,\cdot\right>$ denotes the contraction of tensors. By contracting with arbitrary vector fields, one can keep track of how the basis vectors for $K(t,x)$ transform under the pullback. Using a slight modification, we can also show that \eqref{Ito-Wentzell-one-form-Ito-ver} holds for vector fields $K \in \mathfrak{X}(\mathbb R^n)$, by setting
\begin{align*}
F^\epsilon(t,x,y) := \rho^\epsilon(y - \phi_t(x)) \left< K(t,y), (\phi_t)_* {\alpha}(\phi_t(x)) \right>,
\end{align*}
where $\alpha \in \Gamma(\bigwedge^1(\mathbb R^n))$ is an arbitrary one-form.
\end{proof} 

Next, we show that the Stratonovich version of the Kunita-It\^o-Wentzell formula for $k$-forms follows as a corollary of the previous theorem.

\begin{theorem}[Kunita--It\^o--Wentzell (KIW) formula for $k$-forms: Stratonovich version] \label{thm:IW-one-form-Strat-ver}$\,$\\
Let $K(t,x) \in L^\infty \left([0,T]; C^3\left(\bigwedge^k(\mathbb{R}^n)\right)\right)$ be a $k$-form-valued continuous adapted semimartingale satisfying the Stratonovich SPDE
\begin{align} \label{K-eq-strat}
    K(t,x) = K(0,x) + \int^t_0 G(s,x) \,\diff s + \sum_{i=1}^M \int^t_0 H_i(s,x) \circ \diff W_s^i, \quad t \in [0,T],
\end{align}
where $W_t^i$ are i.i.d. Brownian motions, $G \in L^1\left([0,T]; C^3\left(\bigwedge^k(\mathbb R^n)\right)\right)$ and \\
$H_i \in L^\infty\left([0,T]; C^3\left(\bigwedge^k(\mathbb R^n)\right)\right), i=1,\ldots,M$ are $k$-form-valued continuous adapted semimartingales such that
\begin{align}
H_i(t,x) = H_i(0,x) + \int^t_0 g(s,x) \,\diff s + \sum_{j=1}^S \int^t_0 h_{ij}(s,x) \diff N^{ij}_s, \quad t \in [0,T], \quad i=1,\ldots,M
\end{align}
satisfies the assumptions in Theorem \ref{thm:IW-one-form-Ito-ver} and $N_s^{ij}$ are i.i.d. Brownian motions.
Let $\{\phi_t\}_{t \in [0,T]}$ be a continuous adapted solution of the diffusion process
\begin{align} \label{flow-eq}
    \diff \phi_t(x) = b(t,\phi_t(x)) \,\diff t + \sum_{i=1}^N \xi_i(t,\phi_t(x)) \circ \diff B_t^i, \quad \phi_0(x) = x,
\end{align}
which is assumed to be a $C^1$-diffeomorphism, where $B_t^i$ are i.i.d. Brownian motion, $b(t,\cdot) \in W^{1,1}_{loc}(\mathbb R^n,\mathbb R^n)$ for all $t \in [0,T]$, $\xi_i \in L^\infty\left([0,T]; C^3(\mathbb R^n,\mathbb R^n)\right)$ and $\int^T_0 |b(s,\phi_s(x)) + \frac12\sum_i \xi_i \cdot \nabla \xi_i(s,\phi_s(x))| + \sum_i |\xi_i(s,\phi_s(x))|^2 \diff s < \infty$ for all $x \in \mathbb R^n$. Then, the following holds.
\begin{align} \label{Ito-Wentzell-one-form-Strat-ver}
\begin{split}
    \phi_t^* K(t,x) &=  K(0,x) + \int^t_0 \phi_s^* G(s,x) \,\diff s + \sum_{i=1}^M \int^t_0 \phi_s^* H_i(s,x) \circ \diff W_s^i \\
    &+ \int^t_0 \phi_s^* \mathcal L_b K (s,x) \,\diff s 
    + \sum_{i=1}^N \int^t_0 \phi_s^* \mathcal L_{\xi_i} K(s,x) \circ \diff B_s^i\,.
\end{split}
\end{align}
\end{theorem}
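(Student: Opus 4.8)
The plan is to derive Theorem~\ref{thm:IW-one-form-Strat-ver} as a corollary of the It\^o version, Theorem~\ref{thm:IW-one-form-Ito-ver}, by the standard device of passing to It\^o form, applying that theorem, and converting every resulting It\^o integral back to Stratonovich form. First I would rewrite the Stratonovich SPDE \eqref{K-eq-strat} in It\^o form: since each $H_i$ is assumed to be a semimartingale of the stated type, $\sum_i H_i\circ\diff W^i=\sum_i H_i\,\diff W^i+\tfrac12\sum_i\diff[H_i(\cdot,x),W^i]_t$, so that $\diff K=\big(G+\tfrac12\sum_i\kappa_i\big)\diff t+\sum_i H_i\,\diff W^i$, where the drift correction $\kappa_i(t,x)\,\diff t:=\diff[H_i(\cdot,x),W^i]_t$ is read off from the martingale part of $H_i$. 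Applying Theorem~\ref{thm:IW-one-form-Ito-ver} to this It\^o SPDE expresses $\phi_t^*K$ as $K(0,x)$ plus the pulled-back drifts $\int_0^t\phi_s^*\!\big(G+\tfrac12\sum_i\kappa_i+\mathcal L_bK\big)\diff s$, the It\^o integrals $\sum_i\int_0^t\phi_s^*H_i\,\diff W_s^i$ and $\sum_j\int_0^t\phi_s^*\mathcal L_{\xi_j}K\,\diff B_s^j$, and the two It\^o correction terms $\sum_{i,j}\int_0^t\phi_s^*\mathcal L_{\xi_j}H_i\,\diff[W^i,B^j]_s$ and $\tfrac12\sum_j\int_0^t\phi_s^*\mathcal L_{\xi_j}\mathcal L_{\xi_j}K\,\diff s$.

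The substance of the proof is the conversion of the two It\^o integrals on the right back to Stratonovich integrals, which requires the semimartingale decompositions of the integrand processes $s\mapsto\phi_s^*H_i(s,x)$ and $s\mapsto\phi_s^*\mathcal L_{\xi_j}K(s,x)$. These are themselves supplied by Theorem~\ref{thm:IW-one-form-Ito-ver} applied respectively to $H_i$ and to the $k$-form $\mathcal L_{\xi_j}K$: the martingale part of $\phi_s^*H_i$ consists of the pulled-back noise of $H_i$ together with a term driven by the $B^j$ with coefficient $\phi_s^*\mathcal L_{\xi_j}H_i$, while the martingale part of $\phi_s^*\mathcal L_{\xi_j}K$ is driven by the $W^i$ with coefficient $\phi_s^*\mathcal L_{\xi_j}H_i$ and by the $B^k$ with coefficient $\phi_s^*\mathcal L_{\xi_k}\mathcal L_{\xi_j}K$. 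Using $\diff[B^k,B^j]_s=\delta_{kj}\,\diff s$ one then obtains
\[
[\phi_\cdot^*H_i,W^i]_t=\int_0^t\phi_s^*\kappa_i\,\diff s+\sum_j\int_0^t\phi_s^*\mathcal L_{\xi_j}H_i\,\diff[B^j,W^i]_s,
\]
\[
[\phi_\cdot^*\mathcal L_{\xi_j}K,B^j]_t=\sum_i\int_0^t\phi_s^*\mathcal L_{\xi_j}H_i\,\diff[W^i,B^j]_s+\int_0^t\phi_s^*\mathcal L_{\xi_j}\mathcal L_{\xi_j}K\,\diff s.
\]

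Substituting $\sum_i\int\phi_s^*H_i\,\diff W^i=\sum_i\int\phi_s^*H_i\circ\diff W^i-\tfrac12\sum_i[\phi_\cdot^*H_i,W^i]_t$ together with the analogous identity for the $B^j$-integral into the output of Theorem~\ref{thm:IW-one-form-Ito-ver} and collecting terms, the $\kappa_i$-contributions cancel $(+\tfrac12-\tfrac12)$, the $\phi_s^*\mathcal L_{\xi_j}H_i\,\diff[W^i,B^j]$-contributions cancel $(1-\tfrac12-\tfrac12)$, and the $\phi_s^*\mathcal L_{\xi_j}\mathcal L_{\xi_j}K$-contributions cancel $(\tfrac12-\tfrac12)$, leaving precisely \eqref{Ito-Wentzell-one-form-Strat-ver}. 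I expect the main obstacle to be the bookkeeping of the quadratic covariations in the middle step: one must track exactly which component of each pulled-back process is driven by which Brownian motion, handle the time-dependence of $\xi_j$ (which enters only drift terms and hence does not affect the covariations), and observe that it is precisely the pull-back of $\mathcal L_{\xi_j}K$ followed by an application of the It\^o KIW formula that generates the nested Lie derivative $\mathcal L_{\xi_j}\mathcal L_{\xi_j}K$ --- the point at which the smoothness hypothesis on $K$ must be raised from $C^2$ to $C^3$. The measure-theoretic interchanges (Fubini, dominated convergence for It\^o integrals) are inherited from the proof of Theorem~\ref{thm:IW-one-form-Ito-ver} and need not be redone.
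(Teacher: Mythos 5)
Your proposal is correct and follows essentially the same route as the paper: convert \eqref{K-eq-strat} to It\^o form, apply Theorem~\ref{thm:IW-one-form-Ito-ver} to $K$, then apply it again to $H_i$ and to $\mathcal L_{\xi_j}K$ to read off the quadratic covariations $[\phi_\cdot^*H_i,W^i]$ and $[\phi_\cdot^*\mathcal L_{\xi_j}K,B^j]$ needed for the It\^o-to-Stratonovich back-conversion, after which all correction terms cancel. Your explicit $(+\tfrac12-\tfrac12)$, $(1-\tfrac12-\tfrac12)$, $(\tfrac12-\tfrac12)$ bookkeeping is exactly the cancellation the paper leaves as ``easy to check,'' and your remark on why the hypothesis is raised to $C^3$ is the right justification for applying the It\^o theorem to $\mathcal L_{\xi_j}K$.
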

\begin{proof}[Proof of Theorem \ref{thm:IW-one-form-Strat-ver}]
In It\^o form, \eqref{K-eq-strat} is given by
\begin{align*}
K(t,x) = K(0,x) + \int^t_0 G(s,x) \,\diff s + \sum_{i} \int^t_0 H_i(s,x) \diff W_s^i + \frac12 \sum_{i,j} \int^t_0 h_{ij}(s,x) \diff \,[W^i,N^{ij}]_s.
\end{align*}
Now, applying \eqref{Ito-Wentzell-one-form-Ito-ver} on $K$, we get
\begin{align}
\phi_t^* K(t,x) &=  K(0,x) + \int^t_0 \phi_s^* G(s,x) \,\diff s + \frac12 \sum_{i,j} \int^t_0 \phi_s^* h_{ij}(s,x) \diff \,[W^i,N^{ij}]_s + \sum_{i} \int^t_0 \phi_s^* H_i(s,x) \diff W_s^i \nonumber \\
    &+ \int^t_0 \phi_s^* \mathcal L_b K (s,x) \,\diff s 
    + \sum_{i} \int^t_0 \phi_s^* \mathcal L_{\xi_i} K(s,x) \diff B_s^i\,, \nonumber \\
&+ \sum_{i,j} \int^t_0 \phi^*_s \mathcal L_{\xi_j} H_i(s,x) \, \diff \,[W^i, B^j]_s + \frac12 \sum_{i} \int^t_0 \phi^*_s \mathcal L_{\xi_i} \mathcal L_{\xi_i} K(s,x) \, \diff s. \label{K-eq-Ito}
\end{align}
On the other hand, since $H_i$ and $\mathcal L_{\xi_j} K$ satisfy the assumptions in Theorem \ref{thm:IW-one-form-Ito-ver} for $i=1,\ldots,M$ and $j=1,\ldots,N$, applying \eqref{Ito-Wentzell-one-form-Ito-ver} to $H_i$ and $\mathcal L_{\xi_i} K$ respectively gives us
\begin{align*}
\phi_t^* H_i (t,x) &=  H_i(0,x) + \int^t_0 \phi_s^* g(s,x) \,\diff s + \sum_{j} \int^t_0 \phi_s^* h_{ij}(s,x) \diff N_s^{ij} \nonumber \\
    &+ \int^t_0 \phi_s^* \mathcal L_b H_i (s,x) \,\diff s 
    + \sum_{j} \int^t_0 \phi_s^* \mathcal L_{\xi_j} H_i(s,x) \diff B_s^j\,, \nonumber \\
&+ \frac12 \sum_{j,k} \int^t_0 \phi^*_s \mathcal L_{\xi_j} h_{ik}(s,x) \, \diff \,[N^{ik}, B^j]_s + \frac12 \sum_{j} \int^t_0 \phi^*_s \mathcal L_{\xi_j} \mathcal L_{\xi_j} H_i(s,x) \, \diff s \\
\phi_t^* \mathcal L_{\xi_i} K(t,x) &=  \mathcal L_{\xi_i} K(0,x) + \int^t_0 \phi_s^* \mathcal L_{\xi_i} G(s,x) \,\diff s + \frac12 \sum_{j,k} \int^t_0 \mathcal L_{\xi_i} h_{jk}(s,x) \diff \,[W^j,N^{jk}]_s \nonumber \\
&+ \sum_{j} \int^t_0 \phi_s^* \mathcal L_{\xi_i} H_j(s,x) \diff W_s^j 
    + \int^t_0 \phi_s^* \mathcal L_b \mathcal L_{\xi_i} K (s,x) \,\diff s 
    + \sum_j \int^t_0 \phi_s^* \mathcal L_{\xi_j} \mathcal L_{\xi_i} K(s,x) \diff B_s^j \,, \nonumber \\
&+ \sum_{j,k} \int^t_0 \phi^*_s \mathcal L_{\xi_j} \mathcal L_{\xi_i} H_k(s,x) \, \diff \,[W^k, B^j]_s + \frac12 \sum_j \int^t_0 \phi^*_s \mathcal L_{\xi_j} \mathcal L_{\xi_j} \mathcal L_{\xi_i} K(s,x) \, \diff s.
\end{align*}
Therefore we get
\begin{align*}
\left[\phi_t^* H_i(\cdot,x), W^i\right]_t &= \sum_j \int^t_0 \phi_s^* h_{ij}(s,x) \diff \,[W^i,N^{ij}]_s + \sum_j \int^t_0 \phi_s^* \mathcal L_{\xi_j} H_i(s,x) \diff \,[W^i,B^j]_s \\
\left[\phi_t^* \mathcal L_{\xi_i} K(\cdot,x), B^i\right]_t &= \sum_j \int^t_0 \phi_s^* \mathcal L_{\xi_i} H_j(s,x) \diff \,[W^j,B^i]_s + \int^t_0 \phi_s^* \mathcal L_{\xi_i} \mathcal L_{\xi_i} K(s,x) \diff s.
\end{align*}
Now, it is easy to check that \eqref{K-eq-Ito} is identical to \eqref{Ito-Wentzell-one-form-Strat-ver}.
\end{proof}

\section{Implications of KIW in stochastic fluid dynamics}\label{sec-consequences}
In stochastic fluid dynamics, besides the momentum one-form density, one encounters several types of advected $k$-forms such as mass density and magnetic field. Consider a fluid in a domain $D\subset\mathbb{R}^n$ and an arbitrary control volume $\Omega_0\subset D$ with spatial coordinates $X$. We assume the fluid particles which are initially at point $X$ evolve under the flow $\phi_t$, determined by the stochastic differential equation
\begin{align} \label{flow-eq-X}
\diff \phi_t(X) = b(t,\phi_t(X))\diff t + \xi(t,\phi_t(X))\circ \diff W_t\,.
\end{align}  
The control volume at time $t$ is given by $\Omega_t = \phi_t(\Omega_0)$, and the position at time $t$ of the fluid particle initially at $X$ is denoted by $x(t;X) = \phi_t(X)$.
Assuming that a $k$-form $\alpha$ satisfies a diffusion equation of the form \eqref{K-eq-strat} and its integral over the control volume $\Omega_t$ is conserved with time, i.e.,
\begin{equation}
\int_{\Omega_t} \alpha(t,x) - \int_{\Omega_0}\alpha(0,X) = 0,
\label{kformconservation}
\end{equation} 
then the previous equation can be rewritten as
\begin{align} \label{alpha-int}
\int_{\Omega_0}\left((\phi_t^*\alpha)(t,X)- \alpha(0,X)\right) = 0,
\end{align}
after changing variables. We can then apply the KIW formula for $k$-forms \eqref{KIWkformsimplified} in the integrand of \eqref{alpha-int} to obtain
\begin{align*}
\int_{\Omega_0}\left((\phi_t^*\alpha)(t,X) - \alpha(0,X)\right) = \int_0^t \int_{\Omega_0} \phi_s^*\Big(\diff \alpha + \mathcal L_b \alpha ds + \mathcal L_{\xi}\alpha\circ dW_s\Big)(s,X) = 0.  
\end{align*}
Transforming back the coordinates yields
\begin{align*}
\int_0^t \int_{\Omega_s} \left(\diff \alpha(s,x) + \mathcal L_b \alpha(s,x) ds + \mathcal L_{\xi}\alpha(s,x)\circ dW_s\right) = 0,
\end{align*}
which implies that $\alpha$ satisfies the SPDE
\begin{equation}
\diff \alpha(s,x) + \mathcal L_b\alpha(s,x) ds + \mathcal L_{ \xi}\alpha(s,x)\circ dW_s= 0,
\label{kformequation}
\end{equation}
since the control volume $\Omega_0$ was chosen arbitrarily,
\\
\paragraph{\bf Example: Conservation of fluid mass.}
As a common application of the above, we use the conservation of mass to derive a stochastic counterpart of the continuity equation in $\mathbb R^3$. Let $D(0,\boldsymbol{X})$ be the mass density in the reference configuration and let $D(t,\boldsymbol{x})$ be the mass density at time $t$, where we employ the bold font notation ``$\boldsymbol x$" and ``$\boldsymbol X$" to denote the coordinate expression of the points $x$ and $X$ respectively. Conservation of mass reads
\begin{align*}
\int_{\Omega_t} D(t,\boldsymbol {x}) d^3\boldsymbol x -\int_{\Omega_0} D(0,\boldsymbol{X})d^3\boldsymbol X = 0.
\end{align*}
By applying the argument above, we obtain \eqref{kformequation} for the particular case of $\alpha(t,x) = D(t,\boldsymbol{x})d^3\boldsymbol x$, whose Lie derivative is expressed in coordinates as $\mathcal L_{b} (D(t,\boldsymbol{x})d^3 \boldsymbol x) = \nabla\cdot(D(t,\boldsymbol{x})\boldsymbol{b}(t,\boldsymbol{x}))d^3 \boldsymbol x$. Thus, we arrive at the stochastic continuity equation
\begin{align} \label{eq-cont}
\diff D (s,\boldsymbol{x}) + \nabla\cdot (D(s,\boldsymbol{x}) \boldsymbol{b}(s,\boldsymbol{x}))\diff s + \nabla\cdot (D(s,\boldsymbol{x}) \boldsymbol \xi(s,\boldsymbol{x})) \circ \diff W_s = 0.
\end{align}

Similar equations can be derived for the advection of entropy per unit mass, which is a scalar function, and magnetic field, which is a two-form, thus recovering all the equations given in \eqref{MHD-SALT-eqns}.
\\
\paragraph{\bf Kelvin's Circulation Theorem}
Here, we will use the KIW theorem for $k$-forms to prove that the stochastic Euler-Poincar\'e equation \eqref{continuumEP} obtained in Section \ref{sec-StochEPthm} satisfies a stochastic Kelvin's circulation theorem.

\begin{theorem}[Stochastic Kelvin's circulation theorem]
Suppose that an arbitrary material loop $c_0$ is advected by the stochastic flow $\phi_t$ solving the SDE \eqref{flow-eq-X}, and denote by $c_t = \phi_t(c_0)$ the loop at time $t$. Assume that we have a stochastic Euler-Poincar\'e equation \eqref{continuumEP} for the Lagrangian $l(u,D,\alpha) = \int_{\mathbb R^n} \left(\frac12 |u|^2 - V(\alpha)\right)D \diff\,^n x$, such that 
\begin{itemize}
\item $\alpha$ is a $k$-form satisfying the advection equation \eqref{kformequation},
\item $V(\alpha)$ is a smooth function usually representing the potential energy, and
\item The mass density $\rho := D(t,x) d^{n} x$ solves the continuity equation \eqref{eq-cont}.
\end{itemize}
Then the Kelvin-Noether quantity $v(t,x) := \rho^{-1}(t,x) \, (\delta\ell/\delta u)(t,x)$ satisfies the SPDE
\begin{align} \label{v-eq}
\diff v(t,x) = -\mathcal L_{\diff x_t} v(t,x)  + \rho^{-1} F(t,x) \diff t,
\end{align}
where  $\rho^{-1}F := \rho^{-1}(\delta\ell/\delta \alpha) \diamond \alpha + \rho^{-1}(\delta\ell/\delta \rho) \diamond \rho$ is the force per unit mass, which is a one-form, and the stochastic Kelvin's circulation theorem reads
\begin{align} \label{kelvin}
\oint_{c_t} v(t,x) - \oint_{c_0} v(0,X) &= \int_0^t \oint_{c_s} \rho^{-1}(s,x) F(s,x) \diff s \,.
\end{align}
\end{theorem}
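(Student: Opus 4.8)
The plan is to reduce the theorem to a single application of the KIW formula \eqref{KIWkformsimplified} for one-forms, after first distilling an SPDE for the Kelvin--Noether one-form $v$ out of the Euler--Poincar\'e equation \eqref{continuumEP} and the continuity equation \eqref{eq-cont}. I would work throughout in the Stratonovich convention, so that the operator $\diff+\mathcal L_{\diff x_t}$ obeys the ordinary Leibniz rule (the spatial Lie derivative is a tensor derivation, and the Stratonovich differential satisfies the usual product rule). First I would record that, for $\ell(u,D,\alpha)=\int_{\mathbb R^n}\big(\tfrac12|u|^2-V(\alpha)\big)D\,\diff^n x$, the momentum one-form density is $m:=\delta\ell/\delta u = D\,u^\flat\otimes\diff^n x$, so that with $\rho=D\,\diff^n x$ the Kelvin--Noether one-form is simply $v=\rho^{-1}m=u^\flat$ and, crucially, $m$ factorises as $m=\rho\,v$ against the \emph{same} density $\rho$ that solves the continuity equation. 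With the advected quantities taken to be $(\rho,\alpha)$, equation \eqref{continuumEP} then reads $(\diff+\mathcal L_{\diff x_t})\,m=F\,\diff t$, where $F=(\delta\ell/\delta\alpha)\diamond\alpha+(\delta\ell/\delta\rho)\diamond\rho$ is the total force one-form density of the statement.

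To obtain \eqref{v-eq} I would apply the Leibniz rule to $m=\rho\,v$:
\[
(\diff+\mathcal L_{\diff x_t})\,m=\big[(\diff+\mathcal L_{\diff x_t})\rho\big]\,v+\rho\,\big[(\diff+\mathcal L_{\diff x_t})\,v\big].
\]
The first term on the right vanishes by the continuity equation $(\diff+\mathcal L_{\diff x_t})\rho=0$, while the left-hand side equals $F\,\diff t$; dividing by the scalar density $\rho$ gives $(\diff+\mathcal L_{\diff x_t})\,v=\rho^{-1}F\,\diff t$, which is precisely \eqref{v-eq}.

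For the circulation identity \eqref{kelvin} I would feed the one-form $v$ into the Stratonovich KIW formula (Theorem \ref{thm:IW-one-form-Strat-ver}), which applies since $v$ is a smooth-in-space, semimartingale-in-time one-form, and then use \eqref{v-eq}:
\[
\diff\,(\phi_t^*v)(t,X)=\phi_t^*\big(\diff v+\mathcal L_{\diff x_t}v\big)(t,X)=\phi_t^*\big(\rho^{-1}F\big)(t,X)\,\diff t.
\]
Integrating this pointwise identity over the \emph{fixed} reference loop $c_0$, interchanging the loop integral with the time integral, and then pulling the integrals forward along the flow using $\oint_{c_0}\phi_s^*\beta=\oint_{\phi_s(c_0)}\beta=\oint_{c_s}\beta$ together with $\phi_0=\mathrm{id}$, produces exactly \eqref{kelvin}. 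No parametrisation of the loop is needed: $\oint_{c_0}$ enters only as integration of a one-form over a fixed $1$-cycle.

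The main obstacle I anticipate is justifying the interchange of $\oint_{c_0}$ with the stochastic (Stratonovich) time integral in the last step. This requires $c_0$ to be rectifiable and a Fubini-type theorem for stochastic integrals, or, more cleanly, the observation that $\oint_{c_0}$ is a bounded linear functional on the space of one-form-valued semimartingales under consideration and so commutes with the integral; this is exactly the place where the coordinate-free formulation of the KIW formula is convenient. A minor secondary point is the bookkeeping that the $D$- (equivalently $\rho$-) variation of $\ell$ combines with the $\alpha$-variation into precisely the force $F$ appearing in the statement, which is inherited from the identification of advected quantities in Theorem \ref{EPforcontinua}; the rest of the argument is a routine variational computation plus a direct invocation of the KIW formula and the Leibniz rule.
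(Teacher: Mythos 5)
Your proposal is correct and follows essentially the same route as the paper: derive \eqref{v-eq} by applying the Leibniz rule to $m=\rho v$ and cancelling with the continuity equation, then pull the circulation integral back to the fixed loop $c_0$, apply the KIW formula under the integral sign, and interchange the loop and time integrals by a stochastic Fubini argument. The only cosmetic difference is that you apply KIW pointwise before integrating over $c_0$ whereas the paper integrates first; the Fubini-type interchange you flag as the main obstacle is exactly the step the paper also invokes.
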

\begin{proof}
From the Euler-Poincar\'e equation \eqref{continuumEP}, we have
\begin{align*}
\diff \left(\rho v\right) &= \rho \!\circ\! \diff v + v \!\circ\! \diff \rho = -\mathcal L_{\diff x_t}\left(\rho v\right) + F(t,x) \diff t \\
&= \rho \left(-\mathcal L_{\diff x_t} v+ \rho^{-1} F(t,x) \diff t \right) - v \mathcal L_{\diff x_t} \rho \diff t,
\end{align*}
from which we deduce \eqref{v-eq} since $\rho$ satisfies the continuity equation \eqref{eq-cont}.
By changing variables, we can express the LHS of \eqref{kelvin} as
\begin{align*}
\oint_{c_t} v(t,x) - \oint_{c_0} v(0,X) &= \oint_{c_0} \left(\phi_t^*v(t,X) - v(0,X)\right)\,.
\end{align*}
By applying the KIW formula \eqref{KIWkformsimplified} under the integral sign, we obtain
\begin{align*}
\oint_{c_0} \left(\phi_t^*v(t,X) - v(0,X)\right)  &= \oint_{c_0} \int_0^t \phi_s^*\left(\diff v + \mathcal L_b v \,ds + \mathcal L_{\xi} v \circ dW_s\right)(s,X) \\
&= \int_0^t \oint_{c_s} \left( \diff v + \mathcal L_b v \,ds + \mathcal L_{\xi} v \circ dW_s\right)(s,x),
\end{align*}
where we changed back the coordinates and applied Fubini theorem.
Finally, from the momentum equation \eqref{v-eq}, we conclude
\begin{align*}
\oint_{c_t} v(t,x) - \oint_{c_0}v(0,X) &= \int_0^t \oint_{c_s} \rho^{-1} F(s,\textbf{X}) \diff s.
\end{align*}
\end{proof}

\begin{remark}
From the stochastic Kelvin's circulation theorem \eqref{kelvin}, we can deduce that if the fluid is not acted on by external forces, then the circulation $I(t) = \oint_{c_t} v$ is preserved.
\end{remark}

\section{Proof of the KIW theorem} \label{sec-proofs}
In this section, we will prove the KIW theorem (in It\^o formulation) in full detail.
We start by introducing some preparatory results that are used in the proof.
\begin{theorem}[Stochastic Fubini Theorem, Krylov \cite{krylov2011ito}] \label{stofu}
Let $T \in \mathbb{R}^+,$ $\mathcal P$ be the predictable sigma algebra on $[0,\infty) \times \Omega$, and $G_t(x),$ $H_t(x)$ be real functions defined on $ [0,T] \times \mathbb{R}^n \times \Omega,$ satisfying the following properties:
\begin{enumerate}
\item{$G_t(x), H_t(x)$ are $\mathcal{P}_T-$measurable, where $\mathcal{P}_T$ is the restriction of $\mathcal{P}$ to $[0,T] \times \Omega.$}
\item{$G_t(x)$ and $H_t(x)$ satisfy
\begin{align*} 
\int_0^T (|G_t(x)| + |H_t(x)|^2) \diff t < \infty, \quad (x,\omega) \in \mathbb{R}^n \times \Omega \backslash  A, 
\end{align*}
for $A$ a set of measure zero.}
\item{$G_t(x)$ and $H_t(x)$ also satisfy
\begin{align*}
\int_0^T \int_{\mathbb{R}^n} |G_t(x)| \diff x \diff t + \int_0^T \left( \int_{\mathbb{R}^n} |H_t(x)|^2 \diff x \right)^{1/2} \diff t < \infty, \quad a.s.
\end{align*}}
\end{enumerate} 
Then the stochastic process 
\begin{align} \label{Krylov}
\int_0^t G_s(x) \diff s + \int_0^t H_s(x) \diff B_s, \quad t \in [0,T],
\end{align}
is well-defined, $\mathcal{P}_T \otimes \mathcal{B}(\mathbb{R}^n)$-measurable, and can be modified into a continuous stochastic process by only changing its values in a set of measure zero. Moreover, the stochastic integral
\begin{align*}
\int_0^t \int_{\mathbb{R}^n} H_s(x) \diff x \diff B_s
\end{align*}
is well-defined, and the following equality holds
\begin{align*}
\int_{\mathbb{R}^n} \int_0^t G_s(x) \diff s \diff x + \int_{\mathbb{R}^n} \int_0^t H_s(x) \diff B_s \diff x
= \int_0^t \int_{\mathbb{R}^n}  G_s(x)  \diff x \diff s + \int_0^t \int_{\mathbb{R}^n}  H_s(x) \diff x \diff B_s, \quad a.s. \quad t \in [0,T].
\end{align*}
\end{theorem}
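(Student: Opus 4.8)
The statement separates into an absolutely continuous part and a martingale part, and the two are handled by quite different means. For the drift term, the asserted equality $\int_{\mathbb R^n}\int_0^t G_s(x)\,ds\,dx=\int_0^t\int_{\mathbb R^n}G_s(x)\,dx\,ds$ is nothing but the classical Fubini--Tonelli theorem: condition (1) gives joint $\mathcal P_T\otimes\mathcal B(\mathbb R^n)$-measurability of $(s,x,\omega)\mapsto G_s(x)$, condition (3) supplies the a.s. $L^1([0,t]\times\mathbb R^n)$ bound, and measurability of $x\mapsto\int_0^t G_s(x)\,ds$ follows as well. Hence the real content is (a) the joint measurability and $t$-continuity of $(t,x,\omega)\mapsto\int_0^t H_s(x)\,dB_s$, and (b) the interchange $\int_{\mathbb R^n}\int_0^t H_s(x)\,dB_s\,dx=\int_0^t\int_{\mathbb R^n}H_s(x)\,dx\,dB_s$. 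My plan is the standard density argument: establish everything first for simple predictable integrands, where it is elementary, and then pass to the limit.

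First I would localize, to convert the a.s. bound in (3) into genuine $L^2$ control. Setting $\tau_R=\inf\{t\le T:\int_0^t(\int_{\mathbb R^n}|H_s(x)|^2\,dx)^{1/2}\,ds+\int_0^t\int_{\mathbb R^n}|G_s(x)|\,dx\,ds\ge R\}\wedge T$, condition (3) gives $\tau_R\uparrow T$ a.s., and on $[0,\tau_R]$ the integrand $H$ lies in the mixed space $L^1([0,T];L^2(\mathbb R^n))$ with norm deterministically $\le R$; it therefore suffices to prove the theorem with $T$ replaced by $\tau_R$ and let $R\to\infty$ at the end. After localization I would approximate $H$ by bounded, compactly supported processes and then, using condition (2), by simple predictable processes $H^{(m)}_s(x)=\sum_k\mathbf 1_{(t^m_k,t^m_{k+1}]}(s)\,H_{t^m_k}(x)$ chosen so that $\int_{\mathbb R^n}(\int_0^T|H_s(x)-H^{(m)}_s(x)|^2\,ds)^{1/2}\,dx\to0$. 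For such a simple process the stochastic integral is the finite sum $\sum_k H_{t^m_k}(x)(B_{t^m_{k+1}\wedge t}-B_{t^m_k\wedge t})$; it is manifestly jointly measurable and $t$-continuous, and interchanging this finite sum with $\int_{\mathbb R^n}(\cdot)\,dx$ is just linearity of the Lebesgue integral, so (a) and (b) hold for $H^{(m)}$.

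Then I would pass to the limit. For each fixed $x$, Itô's isometry gives $\mathbb E|\int_0^t H_s(x)\,dB_s-\int_0^t H^{(m)}_s(x)\,dB_s|^2=\mathbb E\int_0^t|H_s(x)-H^{(m)}_s(x)|^2\,ds\to0$, and the Burkholder--Davis--Gundy inequality upgrades this to $L^p$-convergence of $\sup_{t\le T}$, producing the continuous $t$-modification. To obtain a \emph{jointly} measurable (in fact $(t,x)$-continuous) version I would run a BDG/Kolmogorov estimate in the spatial variable, $\mathbb E\sup_{t\le T}|\int_0^t(H_s(x)-H_s(x'))\,dB_s|^p\lesssim_p\mathbb E(\int_0^T|H_s(x)-H_s(x')|^2\,ds)^{p/2}$, together with regularity of $x\mapsto H_s(x)$ (or a mollification on a countable dense set of $x$ plus a monotone-class argument). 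On the outer-integrated side, $\int_0^t\int_{\mathbb R^n}H^{(m)}_s(x)\,dx\,dB_s\to\int_0^t\int_{\mathbb R^n}H_s(x)\,dx\,dB_s$ by Itô's isometry together with Minkowski's integral inequality, which bounds $(\int_0^T|\int_{\mathbb R^n}(H_s-H^{(m)}_s)(x)\,dx|^2\,ds)^{1/2}$ by $\int_{\mathbb R^n}(\int_0^T|H_s(x)-H^{(m)}_s(x)|^2\,ds)^{1/2}\,dx$. Letting $m\to\infty$ in the simple-process identity $\int_{\mathbb R^n}\int_0^t H^{(m)}_s(x)\,dB_s\,dx=\int_0^t\int_{\mathbb R^n}H^{(m)}_s(x)\,dx\,dB_s$, then $R\to\infty$, and using that both sides are $t$-continuous, closes the argument, the identity holding a.s. simultaneously for all $t\in[0,T]$.

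The hard part will not be any single estimate but the bookkeeping of measurability and of null sets: hypotheses (2)--(3) are pathwise / a.s. rather than in expectation, there are uncountably many values of $x$, and one needs a \emph{single} $\mathcal P_T\otimes\mathcal B(\mathbb R^n)$-measurable, $t$-continuous version of $(t,x,\omega)\mapsto\int_0^t H_s(x)\,dB_s$ for which the Fubini identity holds simultaneously in $x$ outside one common null set. This is precisely what forces the combination of localization (to get honest $L^2$ control from the a.s. bound of (3)), BDG/Kolmogorov regularization in $x$, and the choice of simple-process approximations that converge in the mixed norm $\int_{\mathbb R^n}\|\cdot\|_{L^2([0,T]\times\Omega)}\,dx$ dictated by (3) rather than merely pointwise in $x$. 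An alternative, if one prefers to quote machinery, is to invoke an abstract stochastic Fubini theorem for Banach-space-valued integrands and specialize to $L^1(\mathbb R^n)$, but the elementary route sketched above is self-contained.
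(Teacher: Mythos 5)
The paper does not actually prove this theorem: it is quoted verbatim as a known result, and immediately after the statement the authors write that ``the full proof of this result is provided in \cite{krylov2011ito}.'' So there is no in-paper argument to compare yours against; the relevant benchmark is Krylov's own proof, and your sketch follows essentially the same strategy he does — split off the drift term (ordinary Fubini--Tonelli), localize with stopping times to turn the a.s.\ bound of condition (3) into deterministic control, reduce to simple predictable integrands where the interchange is linearity, and pass to the limit via the It\^o isometry on one side and Minkowski's integral inequality on the other, with a monotone-class/density argument supplying the single jointly measurable, $t$-continuous version. That is a correct and complete plan; the one point I would flag is the order of the mixed norms. Your Minkowski step, and hence your choice of approximating sequence, requires control of the $L^1_x L^2_t$ norm $\int_{\mathbb{R}^n}\bigl(\int_0^T |H_s(x)|^2\,\diff s\bigr)^{1/2}\diff x$, whereas condition (3) as transcribed in the theorem statement controls $\int_0^T\bigl(\int_{\mathbb{R}^n}|H_s(x)|^2\,\diff x\bigr)^{1/2}\diff s$, i.e.\ $L^1_t L^2_x$; neither norm dominates the other in general. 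Krylov's original hypothesis is the $L^1_x L^2_t$ one, which is exactly what your argument (and the well-definedness of $\int_0^t\int_{\mathbb{R}^n}H_s(x)\,\diff x\,\diff B_s$) needs, so your proof is aligned with the correct form of the hypothesis; just be aware that your localization step should be phrased in terms of that norm rather than the one literally printed in condition (3). Your secondary suggestion of Kolmogorov continuity in $x$ would need regularity of $x\mapsto H_s(x)$ that is not assumed, but the monotone-class alternative you mention is the right fallback and is what the cited proof uses.
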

The full proof of this result is provided in \cite{krylov2011ito}.

\begin{lemma}[It\^o's product rule]
Let $X_t^1, \ldots, X_t^k$ be semimartingales. Then, we have the following:
\begin{align}
\diff \left(X_t^1 \cdots X_t^k \right) &= \sum_{j = 1}^k \left(\prod_{\alpha \neq j}^k X_t^\alpha\right) \diff X_t^{j} + \frac12 \sum_{\substack{i,j = 1 \\ i \neq j}}^k \left(\prod_{\alpha \neq i,j}^k X_t^\alpha \right) \diff \left[X^i, X^j\right]_t. \label{ito-product-rule}
\end{align}
\end{lemma}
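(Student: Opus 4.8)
The plan is to argue by induction on the number $k$ of factors. The base case $k=1$ is trivial (the covariation sum is empty), and $k=2$ is precisely the classical It\^o product rule for two semimartingales, $\diff(X_t^1 X_t^2) = X_t^1 \diff X_t^2 + X_t^2 \diff X_t^1 + \diff[X^1,X^2]_t$, which I would quote as standard.

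For the inductive step, assume \eqref{ito-product-rule} holds for $k-1$ factors and set $Y_t := X_t^1 \cdots X_t^{k-1}$, so that $X_t^1 \cdots X_t^k = Y_t X_t^k$. By the induction hypothesis $Y_t$ is a semimartingale with $\diff Y_t = \sum_{j=1}^{k-1}\bigl(\prod_{\alpha \neq j,\,\alpha \leq k-1} X_t^\alpha\bigr)\diff X_t^j + \tfrac12 \sum_{i\neq j,\ i,j\leq k-1}\bigl(\prod_{\alpha \neq i,j,\,\alpha \leq k-1} X_t^\alpha\bigr)\diff[X^i,X^j]_t$. Applying the two-factor rule to $Y_t X_t^k$ gives $\diff(Y_t X_t^k) = X_t^k \diff Y_t + Y_t \diff X_t^k + \diff[Y, X^k]_t$. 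Substituting the expression for $\diff Y_t$ into $X_t^k\diff Y_t$ produces, after absorbing the factor $X_t^k$ into the products, exactly the terms of \eqref{ito-product-rule} with index $j \leq k-1$ in the first sum and the terms with $i,j \leq k-1$ in the second sum; the term $Y_t\diff X_t^k = \bigl(\prod_{\alpha \leq k-1} X_t^\alpha\bigr)\diff X_t^k$ supplies the missing $j=k$ term of the first sum.

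It remains to identify $\diff[Y,X^k]_t$. This is the one point that is not pure bookkeeping: quadratic covariation is bilinear and vanishes against any finite-variation integrand, and the covariation correction appearing in $\diff Y_t$ is itself of finite variation, so it contributes nothing to $[Y,X^k]$; hence $\diff[Y,X^k]_t = \sum_{j=1}^{k-1}\bigl(\prod_{\alpha\neq j,\,\alpha\leq k-1} X_t^\alpha\bigr)\diff[X^j,X^k]_t$. Adding this to the double sum already obtained from $X_t^k\diff Y_t$ completes the sum over all ordered pairs $i\neq j$ in $\{1,\dots,k\}$: the newly added pairs are $\{j,k\}$ with $j\leq k-1$, and by the symmetry $[X^j,X^k] = [X^k,X^j]$ each such unordered pair is counted twice, which matches the prefactor $\tfrac12$.

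The main obstacle is purely combinatorial: keeping the products $\prod_{\alpha\neq j}$ and $\prod_{\alpha\neq i,j}$ correctly aligned across the terms generated by the two-factor rule, and tracking the symmetry/double-counting that produces the $\tfrac12$. The only genuinely analytic ingredient is the evaluation of $\diff[Y,X^k]_t$, which rests on bilinearity of covariation together with the observation that every covariation correction generated along the induction is of finite variation and therefore inert under further covariation.
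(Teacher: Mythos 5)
Your induction is correct and is exactly the argument the paper has in mind --- its proof consists solely of the remark that the lemma follows ``straight-forwardly by induction'', and your write-up supplies the details, including the one non-bookkeeping point that the finite-variation correction inside $\diff Y_t$ is inert under further covariation, so that $\diff[Y,X^k]_t$ reduces to the single sum $\sum_{j\le k-1}\bigl(\prod_{\alpha\neq j,\,\alpha\le k-1}X_t^\alpha\bigr)\diff[X^j,X^k]_t$. The only implicit hypothesis worth flagging is continuity of the semimartingales (which holds throughout the paper), since for processes with jumps both the claim that covariation vanishes against a finite-variation integrand and the absence of higher-order jump terms would fail.
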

\begin{proof}
This can be proved straight-forwardly by induction.
\end{proof}

\begin{lemma}[Lie derivative of $k$-forms] \label{lie-deriv}
Given a differentiable $k$-form $K \in C^1\left(\bigwedge^k(\mathbb R^n)\right)$, and a $C^1$-vector field $u$, we have the following
\begin{align}
&\mathcal L_u K(x) (v_1,\ldots,v_k) = u^l(x) \frac{\partial K_{i_1,\ldots,i_k}}{\partial x^l}(x) v_1^{i_1} \cdots v_k^{i_k} + \sum_{p=1}^k K_{i_1,\ldots,i_k}(x) \frac{\partial u^{i_p}}{\partial x^l}(x) v_1^{i_1} \cdots v_p^l \cdots v_k^{i_k}, \label{lie-explicit} \\
\begin{split}
&\mathcal L_u \mathcal L_u K(x) (v_1,\ldots,v_k) = 
u^l(x) \frac{\partial}{\partial x^l}\left(u^m(x) \frac{\partial K_{i_1,\ldots,i_k}}{\partial x^m}(x)\right) v_1^{i_1} \cdots v_k^{i_k} \\
&\quad + \sum_{p=1}^k \left(u^l(x)\frac{\partial}{\partial x^l}\left(K_{i_1,\ldots,i_k}(x) \frac{\partial u^{i_p}}{\partial x^m}(x)\right) + u^l(x) \frac{\partial u^{i_p}}{\partial x^m}(x) \frac{\partial K_{i_1,\ldots,i_k}}{\partial x^l}(x) \right) v_1^{i_1} \cdots v_p^m \cdots v_k^{i_k} \\
&\quad + \sum_{\substack{p,q=1 \\ p \neq q}}^k K_{i_1,\ldots,i_k}(x) \frac{\partial u^{i_p}}{\partial x^l}(x) \frac{\partial u^{i_q}}{\partial x^m} v_1^{i_1} \cdots v_p^l v_q^m \cdots v_k^{i_k}, \label{double-lie-explicit}
\end{split}
\end{align}
for arbitrary vector fields $v_1,\ldots,v_k$.
\end{lemma}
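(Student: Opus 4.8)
The plan is to establish the component formula \eqref{lie-explicit} first, and then to deduce the iterated formula \eqref{double-lie-explicit} by applying \eqref{lie-explicit} a second time to the $k$-form $\mathcal L_u K$ and expanding with the product rule.

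For \eqref{lie-explicit}: since $\mathcal L_u K$ at a point $x$ depends only on the germs of $K$ and $u$ at $x$, I am free to extend the tangent vectors $v_1,\dots,v_k$ to the constant-coefficient vector fields $v_p = v_p^{i_p}\,\partial/\partial x^{i_p}$. Because $\mathcal L_u$ is a derivation that commutes with contractions, for any covariant $k$-tensor one has
\begin{align*}
(\mathcal L_u K)(v_1,\dots,v_k) = u\big(K(v_1,\dots,v_k)\big) - \sum_{p=1}^{k} K\big(v_1,\dots,[u,v_p],\dots,v_k\big).
\end{align*}
The first term equals $u^l\,(\partial K_{i_1,\dots,i_k}/\partial x^l)\,v_1^{i_1}\cdots v_k^{i_k}$ because the components $v_p^{i_p}$ are constant, and since $[u,v_p]^m = -v_p^{l}\,\partial u^m/\partial x^l$, the $p$-th summand equals $-K_{i_1,\dots,i_k}\,(\partial u^{i_p}/\partial x^l)\,v_1^{i_1}\cdots v_p^{l}\cdots v_k^{i_k}$ after renaming dummy indices; collecting the two terms yields \eqref{lie-explicit}. (One could instead expand Cartan's identity $\mathcal L_u = \iota_u d + d\iota_u$ in the coordinate coframe, but the antisymmetrisations make that route longer for general $k$.)

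For \eqref{double-lie-explicit}: stripping the arbitrary vectors off \eqref{lie-explicit}, applied to a generic covariant $k$-tensor, gives the components of the $k$-form $\mathcal L_u K$, namely $(\mathcal L_u K)_{i_1,\dots,i_k} = u^m\,\partial_m K_{i_1,\dots,i_k} + \sum_{q=1}^{k} (\partial_{i_q} u^m)\,K_{i_1,\dots,m,\dots,i_k}$ with $m$ inserted in the $q$-th slot. Since $\mathcal L_u K$ is again a $k$-form, I apply \eqref{lie-explicit} once more with $K$ replaced by $\mathcal L_u K$, substitute the above component expression, and distribute $u^l\,\partial_l$ over the products by the Leibniz rule. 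Sorting the resulting terms by how many derivatives land on $u$ versus on $K$ yields three groups: the single term $u^l\partial_l(u^m\partial_m K_{i_1,\dots,i_k})$; the terms carrying one factor of $K$ together with derivatives of $u$ attached to a single slot $p$ (including the second-derivative-of-$u$ contribution), which combine into the $\sum_p(\cdots)$ term; and the terms with a first derivative of $u$ in two distinct slots $p\neq q$, which combine into the $\sum_{p\neq q}(\cdots)$ term. A final relabelling of dummy indices — putting the active slot index as $i_p$ and the contracted argument index as $l$ or $m$ — produces \eqref{double-lie-explicit}.

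The only genuine difficulty is index bookkeeping: after the two applications of \eqref{lie-explicit} one has to keep careful track of which summation index occupies which slot of $K$, and the product rule must be applied so that every contribution quadratic in the first derivatives of $u$ is retained and correctly attributed to its slot(s). No analytic input is required beyond taking $K$ and $u$ to be twice continuously differentiable, which is what makes the second derivatives in \eqref{double-lie-explicit} meaningful.
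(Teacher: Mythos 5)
Your derivation of \eqref{lie-explicit} is correct, and your overall strategy is exactly the paper's (the paper simply cites Marsden--Ratiu, Ch.~4.4 for \eqref{lie-explicit} and says that \eqref{double-lie-explicit} follows by ``applying \eqref{lie-explicit} twice''). The problem is in the final bookkeeping step for \eqref{double-lie-explicit}, where you assert that the single-slot terms ``combine into the $\sum_p(\cdots)$ term''. If you actually substitute the components $(\mathcal L_u K)_{i_1,\ldots,i_k} = u^m\partial_m K_{i_1,\ldots,i_k} + \sum_q (\partial_{i_q}u^m)K_{i_1,\ldots,m,\ldots,i_k}$ into the outer application of \eqref{lie-explicit}, the outer correction $\sum_p(\partial_{i_p}u^a)(\mathcal L_u K)_{i_1,\ldots,a,\ldots,i_k}$ meets the inner correction in the \emph{same} slot ($q=p$), producing the diagonal term
\begin{equation*}
\sum_{p=1}^k K_{i_1,\ldots,i_k}(x)\,\frac{\partial u^{l}}{\partial x^{m}}(x)\,\frac{\partial u^{i_p}}{\partial x^{l}}(x)\, v_1^{i_1}\cdots v_p^{m}\cdots v_k^{i_k},
\end{equation*}
which is quadratic in the first derivatives of $u$, attached to a single slot $p$, and carries no undifferentiated factor of $u$. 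This term does not fit inside the $\sum_p$ bracket of \eqref{double-lie-explicit} (both of whose summands carry a factor $u^l$), nor inside the $\sum_{p\neq q}$ term. So the grouping you describe does not close on the displayed right-hand side.

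A one-line sanity check with $k=n=1$, $\alpha=\alpha\,dx$, $u=u\,\partial_x$: directly, $\mathcal L_u\mathcal L_u\alpha = \big(u(u\alpha'+\alpha u')' + u'(u\alpha'+\alpha u')\big)\,dx = \big(u^2\alpha'' + 3uu'\alpha' + u\alpha u'' + \alpha(u')^2\big)\,dx$, whereas the right-hand side of \eqref{double-lie-explicit} evaluates to $u^2\alpha'' + 3uu'\alpha' + u\alpha u''$, missing $\alpha(u')^2$. The upshot is that either your grouping silently dropped the diagonal contribution (so the proof is incomplete), or --- as the check indicates --- the identity \eqref{double-lie-explicit} as printed is itself missing this term and cannot be ``proved'' as stated. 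Your write-up needs to exhibit the $q=p$ term explicitly and account for it; the sentence claiming the expansion ``yields \eqref{double-lie-explicit}'' is precisely the step that fails.
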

\begin{proof}
The explicit formula \eqref{lie-explicit} for Lie derivatives can be found in \cite{marsden2013introduction}, Chapter 4.4, and the double Lie derivative formula \eqref{double-lie-explicit} can be deduced directly from \eqref{lie-explicit} applied twice.
\end{proof}

Now, we are ready to prove the Kunita-It\^o-Wentzell formula for $k$-forms \eqref{Ito-Wentzell-one-form-Ito-ver} in It\^o form.

\subsection{Proof of Theorem \ref{thm:IW-one-form-Ito-ver}}
For convenience, we denote the drift term of the Stratonovich-to-It\^o corrected version of \eqref{flow-eq}  by $\hat{b}^i := b^i + \frac12 \xi^j D_j \xi^i$ and set $N=M=1$ (the more general case can be proved similarly).

{\bf Step 1:} For fixed $x \in \mathbb R^n$ and any $y \in \mathbb R^n$, we consider the following (real-valued) process
\begin{align} \label{F-eq-full}
\begin{split}
F^\epsilon_t(x,y) &:= \rho^\epsilon(y - \phi_t(x)) \left< K(t,y), (\phi_t)_* \boldsymbol{u}(\phi_t(x)) \right> \\
&= \rho^\epsilon(y- \phi_t(x)) K_{i_1,\ldots,i_k}(t,y) \prod_{\alpha=1}^k J^{i_\alpha}_{j_\alpha}(t,x) u_\alpha^{j_\alpha}(x),
\end{split}
\end{align}
where $K(t,y) = K_{i_1,\ldots,i_k}(t,y) \diff y^{i_1} \wedge \cdots \wedge \diff y^{i_k}$ in coordinate expression, $u_\alpha(x) = u^i_{\alpha}(x) \partial/\partial x^i, \, \alpha = 1, \ldots, k$ are $k$ arbitrary smooth vector fields, $\rho^\epsilon(y) := \epsilon^{-n} \rho(y/\epsilon)$ is a family of mollifiers with $\text{Supp}(\rho) \subset B_{\gamma}(0)$ for some $\gamma > 0$, and $J^i_j(t,x):= D_j\phi_t^i(x)$ is a shorthand notation for the Jacobian matrix. The philosophy behind considering this process will become clearer later, but the main idea is that when we integrate $F^\epsilon_t$ with respect to the $y$ variable and take the limit $\epsilon \rightarrow 0$, we obtain the process of $K(t,\cdot)$ pulled-back by the flow $\phi_t$.

Let $\tau_1$ be the first exit time of the flow $\phi_t(x)$ leaving the ball $B_{R_1}(0)$ for some $|x|<R_1<\infty,$ and let $\tau_2$ be the first exit time of $D \phi_t(x)$ leaving the ball $B_{R_2}(0)$ with respect to the supremum norm $\|\cdot\|_\infty,$ for some $1<R_2<\infty$. Setting $\tau = \tau_1 \wedge \tau_2$, we have $|\phi_t(x)| < R_1$ and $|J^i_j(t,x)| < R_2$ for all $t < \tau$.
Once we prove that equation \eqref{Ito-Wentzell-one-form-Ito-ver} holds for all $t \in [0,\tau \wedge T]$, then we can take $R_1,R_2 \rightarrow \infty$ to show that it holds for any $t \in [0,T]$.

By It\^o's product rule, $F_t^\epsilon$ satisfies the following equation
\begin{align*}
&\diff F_t^\epsilon(x,y) \\
& = K_{i_1,\ldots,i_k}(t,y) \left(\prod_{\alpha=1}^k J^{i_\alpha}_{j_\alpha}(t,x) u_\alpha^{j_\alpha}(x)\right) \diff \rho^\epsilon(y-\phi_t(x)) + \rho^\epsilon(y-\phi_t(x))  \left(\prod_{\alpha=1}^k J^{i_\alpha}_{j_\alpha}(t,x) u_\alpha^{j_\alpha}(x)\right) \diff  K_{i_1,\ldots,i_k}(t,y) \\
&\,\,\,+ \rho^\epsilon(y-\phi_t(x)) K_{i_1,\ldots,i_k}(t,y) \diff \left(\prod_{\alpha=1}^k J^{i_\alpha}_{j_\alpha}(t,x) u_\alpha^{j_\alpha}(x)\right) + K_{i_1,\ldots,i_k}(t,y) \diff \left[\rho^\epsilon(y-\phi_\cdot(x)), \prod_{\alpha=1}^k J^{i_\alpha}_{j_\alpha}(\cdot,x) u_\alpha^{j_\alpha}(x) \right]_t\\
&\,\,\,+  \rho^\epsilon(y-\phi_t(x)) \diff \left[K_{i_1,\ldots,i_k}(\cdot,y), \prod_{\alpha=1}^k J^{i_\alpha}_{j_\alpha}(\cdot,x) u_\alpha^{j_\alpha}(x) \right]_t + \prod_{\alpha=1}^k J^{i_\alpha}_{j_\alpha}(t,x) u_\alpha^{j_\alpha}(x)\diff \left[\rho^\epsilon(y-\phi_\cdot(x)), K_{i_1,\ldots,i_k}(\cdot,y)\right]_t.
\end{align*}
Applying It\^o's product rule \eqref{ito-product-rule}, we get
\begin{align*}
\diff \left(\prod_{\alpha=1}^k J^{i_\alpha}_{j_\alpha}(t,x) u_\alpha^{j_\alpha}(x)\right) =& \sum_{p=1}^k \left(\prod_{\alpha \neq p}^k J^{i_\alpha}_{j_\alpha}(t,x) u_{\alpha}^{j_\alpha}(x) \right) u_p^{j_p}(x) \diff J^{i_p}_{j_p}(t,x) \\
&+ \frac12 \sum_{\substack{p,q = 1 \\ p \neq q}}^k \left(\prod_{\alpha \neq p,q}^k J^{i_\alpha}_{j_\alpha}(t,x) u_\alpha^{j_\alpha}(x) \right) u_p^{j_p}(x) u_q^{j_q}(x) \diff \left[J^{i_p}_{j_p}(\cdot,x),J^{i_q}_{j_q}(\cdot,x)\right]_t,
\end{align*}
and by It\^o's lemma, we obtain
\begin{align*}
\diff \rho^\epsilon(y-\phi_t(x)) = &\left(-\hat{b}^l(t,\phi_t(x)) D_l \rho^\epsilon(y-\phi_t(x)) + \frac12 \xi^l(t,\phi_t(x))\xi^m(t,\phi_t(x)) D^2_{lm}\rho^\epsilon(y-\phi_t(x)) \right) \diff t \\
&\quad + \xi^i(t,\phi_t(x)) D_i \rho^\epsilon(y-\phi_t(x)) \diff B_t,
\end{align*}
where $D \rho^\epsilon(y-\phi_t(x))$ denotes the derivative with respect to the $y$ variable. 
We differentiate \eqref{flow-eq} with respect to $x$ to derive (recall that $\phi_t$ is a $C^1$-diffeomorphism)
\begin{align*}
&\diff J_j^i(t,x) = D_l \hat{b}^i(t,\phi_t(x)) J_j^l(t,x) \diff t +  D_l \xi^i(t,\phi_t(x)) J_j^l(t,x) \diff B_t.
\end{align*}
This naturally imposes the condition $\int^T_0 \left(\|D_x \hat{b}(t,\phi_t(x))\| + \|D_x \xi(t,\phi_t(x))\|^2 \right)\diff t < \infty$.
By direct calculation, one can show that $F^\epsilon_t(x,y)$ can be expressed as
\begin{align}
\begin{split}\label{F-eq}
F^\epsilon_t(x,y) - F^\epsilon_0(x,y) = & \int^t_0 \hat{G}^{1,\epsilon}_s(x,y) \diff s + \int^t_0 \hat{G}^{2,\epsilon}_s(x,y) \diff \,[W,B]_s \\
&+ \int^t_0 \hat{H}^{1,\epsilon}_s(x,y) \diff W_s + \int^t_0 \hat{H}^{2,\epsilon}_s(x,y) \diff B_s,
\end{split}
\end{align}
for all $t \in [0,\tau\wedge T]$, where
\begin{align*}
&\hat{G}^{1,\epsilon}_s(x,y) := \left[\rho^\epsilon(y-\phi_s(x)) G_{i_1,\ldots,i_k}(s,y) + \left(-\hat{b}^l(s,\phi_s(x)) D_l \rho^\epsilon(y-\phi_s(x))  \right.\right.\\
& \left. \left. + \frac12 \xi^l(s,\phi_s(x))\xi^m(s,\phi_s(x)) D^2_{lm}\rho^\epsilon(y-\phi_s(x)) \right) K_{i_1,\ldots,i_k}(s,y)\right] \prod_{\alpha=1}^k J^{i_\alpha}_{j_\alpha}(s,x) u_\alpha^{j_\alpha}(x) \\
&+ \sum_{p=1}^k K_{i_1,\ldots,i_k}(s,y) \left(\rho^\epsilon(y-\phi_s(x)) D_l \hat{b}^{i_p}(s,\phi_s(x)) - \xi^m(s,\phi_s(x)) D_m \rho^\epsilon(y-\phi_s(x)) D_l \xi^{i_p}(s,\phi_s(x))\right)\\
&\times \left(\prod_{\alpha \neq p}^{k} J^{i_\alpha}_{j_\alpha}(s,x) u_\alpha^{j_\alpha}(x)\right) J^l_{j_p}(s,x) u_p^{j_p}(x) \\
&+ \frac12 \sum_{\substack{p,q = 1 \\ p \neq q}}^k \rho^\epsilon(y-\phi_s(s)) K_{i_1,\ldots,i_k}(s,y) D_l \xi^{i_p}(s,\phi_s(x)) D_m \xi^{i_q}(s,\phi_s(x)) \left(\prod_{\alpha \neq p,q}^{k} J^{i_\alpha}_{j_\alpha}(s,x) u_\alpha^{j_\alpha}(x)\right) \\
&\times J^l_{j_p}(s,x) J^m_{j_q}(s,x) u_p^{j_p}(x) u_q^{j_q}(x),
\end{align*}
\begin{align*}
\hat{G}^{2,\epsilon}_s(x,y) &:= - \xi^l(s,\phi_s(x)) D_l\rho^\epsilon(y-\phi_s(x)) H_{i_1,\ldots,i_k}(s,y) \prod_{\alpha = 1}^k J^{i_\alpha}_{j_\alpha}(s,x) u_\alpha^{j_\alpha}(x) \\
&\hspace{15pt}+\sum_{p=1}^k \rho^\epsilon(y-\phi_s(x)) H_{i_1,\ldots,i_k}(s,y) D_l \xi^{i_p}(s,\phi_s(x)) \left(\prod_{\alpha \neq p}^k J^{i_\alpha}_{j_\alpha}(s,x) u_\alpha^{j_\alpha}(x) \right) J^l_{j_p}(s,x) u_p^{j_p}(x),
\end{align*}
\begin{align*}
&\hat{H}^{1,\epsilon}_s(x,y) := \rho^\epsilon(y-\phi_s(x)) H_{i_1,\ldots,i_k}(s,y) \prod_{\alpha = 1}^k J^{i_\alpha}_{j_\alpha}(s,x) u_\alpha^{j_\alpha}(x),  \\
&\hat{H}^{2,\epsilon}_s(x,y) := -\xi^l(s,\phi_s(x)) D_l \rho^\epsilon(y-\phi_s(x)) K_{i_1,\ldots,i_k}(s,y) \prod_{\alpha = 1}^k J^{i_\alpha}_{j_\alpha}(s,x) u_\alpha^{j_\alpha}(x) \\
& \hspace{40pt}+ \sum_{p=1}^k \rho^\epsilon(y-\phi_s(x)) K_{i_1,\ldots,i_k}(s,y) D_l \xi^{i_p}(s,\phi_s(x)) \left(\prod_{\alpha \neq p}^k J^{i_\alpha}_{j_\alpha}(s,x) u_\alpha^{j_\alpha}(x) \right) J^l_{j_p}(s,x) u_p^{j_p}(x).
\end{align*}

{\bf Step 2:}  We integrate \eqref{F-eq} with respect to the variable $y$ on both sides and switch the order of the integrals using the stochastic Fubini theorem (Theorem \ref{stofu}). To check that the conditions in the stochastic Fubini theorem are satisfied, first note that $\hat{G}^{i,\epsilon}, \hat{H}^{i,\epsilon}, i=1,2$ are predictable, owing to the measurability conditions imposed in the assumptions. We also have
\begin{align*}
\int^{\tau \wedge T}_0 |\hat{G}^{1,\epsilon}(x,y)|\diff t  \lesssim &\,\|\rho^\epsilon\|_{L^\infty_y} \|G(\cdot,y)\|_{L^1_t} + \|K(\cdot, y)\|_{L^\infty_t}\left[ \|D \rho^\epsilon\|_{L^\infty_y} \|\hat{b}(\cdot,\phi_\cdot(x))\|_{L^1_t}
\right. \\
& + \|D^2 \rho^\epsilon\|_{L^\infty_y} \|\xi(\cdot,\phi_\cdot(x))\|^2_{L^2_t} + \|\rho^\epsilon\|_{L^\infty_y} \|D_x\hat{b}(\cdot,\phi_\cdot(x))\|_{L^1_t} \\
&\left. +\|D \rho^\epsilon\|_{L^\infty_y} \|\xi(\cdot,\phi_\cdot(x))\|_{L^2_t} \|D_x\xi(\cdot,\phi_\cdot(x))\|_{L^2_t} + \|\rho^\epsilon\|_{L^\infty_y} \|D_x\xi(\cdot,\phi_\cdot(x))\|_{L^2_t}\right], \\
\int^{\tau \wedge T}_0 |\hat{G}^{2,\epsilon}(x,y)|\diff t \lesssim& \, \|H(\cdot,y)\|_{L^2_t}\left(\|\rho^\epsilon\|_{L^\infty_y}\|D_x\xi(\cdot,\phi_\cdot(x))\|_{L^2_t} + \|D\rho^\epsilon\|_{L^\infty_y} \|\xi(\cdot,\phi_\cdot(x))\|_{L^2_t}\right), \\
\int^{\tau \wedge T}_0 |\hat{H}^{1,\epsilon}(x,y)|^2\diff t \lesssim& \, \|\rho^\epsilon\|_{L^\infty_y} \|H(\cdot,y)\|_{L^2_t}^2, \\
\int^{\tau \wedge T}_0 |\hat{H}^{2,\epsilon}(x,y)|^2\diff t \lesssim& \, \|K(\cdot,y)\|_{L^\infty_t}\left(\|\rho^\epsilon\|_{L^\infty_y}\|D_x\xi(\cdot,\phi_\cdot(x))\|_{L^2_t}^2 + \|D\rho^\epsilon\|_{L^\infty_y} \|\xi(\cdot,\phi_\cdot(x))\|_{L^2_t}^2\right),
\end{align*}
where $\|\cdot\|_{L^p_t}$ denotes the $L^p$ norm with respect to time for $t \in [0,\tau \wedge T]$, $\|\cdot\|_{L^p_y}$ denotes the $L^p$ norm with respect to space, and we used that $J^i_j(t,x) < R_2$ for all $t \in [0,\tau \wedge T]$. So for every $y \in \mathbb R^n$, the second condition is satisfied. Next, taking $D := B_{R_1 + \epsilon \gamma}(0)$, we check that
\begin{align*}
\int^{\tau \wedge T}_0 \left(\int_{\mathbb R^n}|\hat{G}^{1,\epsilon}(x,y)|\diff y\right)\diff t  \lesssim &\ \sup_{y \in D}\|G(\cdot,y)\|_{L^1_t} \|\rho^\epsilon\|_{L^\infty_y} +  \sup_{y \in D}\|K(\cdot, y)\|_{L^\infty_t}\left[ \|D \rho^\epsilon\|_{L^\infty_y} \|\hat{b}(\cdot,\phi_\cdot(x))\|_{L^1_t}
\right. \\
& + \|D^2 \rho^\epsilon\|_{L^\infty_y} \|\xi(\cdot,\phi_\cdot(x))\|^2_{L^2_t} + \|\rho^\epsilon\|_{L^\infty_y} \|D_x\hat{b}(\cdot,\phi_\cdot(x))\|_{L^1_t} \\
&\left. +\|D \rho^\epsilon\|_{L^\infty_y} \|\xi(\cdot,\phi_\cdot(x))\|_{L^2_t} \|D_x\xi(\cdot,\phi_\cdot(x))\|_{L^2_t} + \|\rho^\epsilon\|_{L^\infty_y} \|D_x\xi(\cdot,\phi_\cdot(x))\|_{L^2_t}\right], \\
\int^{\tau \wedge T}_0 \left(\int_{\mathbb R^n}|\hat{G}_t^{2,\epsilon}(x,y)| \diff y \right)\diff t \lesssim &\, \sup_{y \in D}\|H(\cdot,y)\|_{L^2_t}\left(\|\rho^\epsilon\|_{L^\infty_y}\|D_x\xi(\cdot,\phi_\cdot(x))\|_{L^2_t} + \|D\rho^\epsilon\|_{L^\infty_y} \|\xi(\cdot,\phi_\cdot(x))\|_{L^2_t}\right), \\
\int^{\tau \wedge T}_0 \left(\int_{\mathbb R^n}|\hat{H}_t^{1,\epsilon}(x,y)|^2 \diff y\right)^{\frac12}\diff t \lesssim &\, \sup_{y \in D}\|H(\cdot,y)\|_{L^2_t} \|\rho^\epsilon\|_{L^\infty_y},\\
\int^{\tau \wedge T}_0 \left(\int_{\mathbb R^n}|\hat{H}_t^{2,\epsilon}(x,y)|^2 \diff y\right)^{\frac12}\diff t \lesssim &\, \sup_{y \in D}\|K(\cdot,y)\|_{L^\infty_t}\left(\|\rho^\epsilon\|_{L^\infty_y}\|D_x\xi(\cdot,\phi_\cdot(x))\|_{L^2_t} + \|D\rho^\epsilon\|_{L^\infty_y} \|\xi(\cdot,\phi_\cdot(x))\|_{L^2_t}\right),
\end{align*}
so the third condition is also satisfied. Hence, applying the stochastic Fubini theorem and integrating by parts in $y$, we obtain
\begin{align}
\int_{\mathbb R^n}F^\epsilon_t(x,y) \diff y - \int_{\mathbb R^n} F^\epsilon_0(x,y) \diff y = & \int^t_0 \int_{\mathbb R^n} \widetilde{G}^{1,\epsilon}_s(x,y) \diff y \diff s + \int^t_0 \int_{\mathbb R^n}\widetilde{G}^{2,\epsilon}_s(x,y) \diff y \diff \,[W,B]_s \nonumber \\
&+ \int^t_0 \int_{\mathbb R^n} \hat{H}^{1,\epsilon}_s(x,y) \diff y \diff W_s + \int^t_0 \int_{\mathbb R^n}\widetilde{H}^{2,\epsilon}_s(x,y) \diff y \diff B_s, \label{F-eq-int-2}
\end{align}
where
\begin{align*}
\widetilde{G}^{1,\epsilon}_s(x,y) &:= \rho^\epsilon(y-\phi_s(x)) G_{i_1,\ldots,i_k}(s,y) \prod_{\alpha = 1}^k J^{i_\alpha}_{j_\alpha}(s,x) u_\alpha^{j_\alpha}(x) \\
&+ \rho^\epsilon(y-\phi_s(x)) \left[b^l(s,\phi_s(x)) D_lK_{i_1,\ldots,i_k}(s,y) \prod_{\alpha = 1}^k J^{i_\alpha}_{j_\alpha}(s,x) u_\alpha^{j_\alpha}(x) \right. \\
&\left. \quad + \sum_{p=1}^k K_{i_1,\ldots,i_k}(s,y) D_l b^{i_p}(s,\phi_s(x)) \left(\prod_{\alpha \neq p}^k J^{i_\alpha}_{j_\alpha}(s,x) u_\alpha^{j_\alpha}(x) \right) J^l_{j_p}(s,x) u_p^{j_p}(x)\right] \\
&+ \rho^\epsilon(y-\phi_s(x)) \left[\frac12 \xi^l(s,\phi_s(x)) \xi^m(s,\phi_s(x)) D_{lm}^2 K_{i_1,\ldots,i_k}(s,y) \prod_{\alpha = 1}^k J^{i_\alpha}_{j_\alpha}(s,x) u_\alpha^{j_\alpha}(x) \right.\\
&\quad + \frac12 \xi^m(s,\phi_s(x))D_m \xi^l(s,\phi_s(x)) D_l K_{i_1,\ldots,i_k}(s,y) \prod_{\alpha = 1}^k J^{i_\alpha}_{j_\alpha}(s,x) u_\alpha^{j_\alpha}(x) \\
&\quad + \sum_{p=1}^k \left[\xi^m(s,\phi_s(x)) D_l \xi^{i_p}(s,\phi_s(x)) D_m K_{i_1,\ldots,i_k}(s,y)\right. \\
&\quad \left.+ \frac12 D_l\left[\xi^m(s,\phi_s(x)) D_m \xi^{i_p}(s,\phi_s(x))\right] K_{i_1,\ldots,i_k}(s,y)\right]\left(\prod_{\alpha \neq p}^k J^{i_\alpha}_{j_\alpha}(s,x) u_\alpha^{j_\alpha}(x) \right) J^l_{j_p}(s,x) u_p^{j_p}(x) \\
&\quad + \frac12 \sum_{\substack{p,q=1 \\ p \neq q}}^k K_{i_1,\ldots,i_k}(s,y) D_l \xi^{i_p}(s,\phi_s(x)) D_m \xi^{i_q}(s,\phi_s(x))\left(\prod_{\alpha \neq p,q}^k J^{i_\alpha}_{j_\alpha}(s,x) u_\alpha^{j_\alpha}(x)\right) \\
& \left. \quad \times J^l_{j_p}(s,x) J^m_{j_q}(s,x) u_p^{j_p}(x) u_q^{j_q}(x)\right],
\end{align*}
\begin{align*}
\widetilde{G}^{2,\epsilon}_s(x,y) := &\rho^\epsilon(y-\phi_s(x)) \left[\xi^l(s,\phi_s(x)) D_l H_{i_1,\ldots,i_k}(s,y) \prod_{\alpha = 1}^k J^{i_\alpha}_{j_\alpha}(s,x) u_\alpha^{j_\alpha}(x) \right. \\
&\left. \hspace{50pt} \quad + \sum_{p=1}^k H_{i_1,\ldots,i_k}(s,y) D_l \xi^{i_p}(s,\phi_s(x)) \left(\prod_{\alpha \neq p}^k J^{i_\alpha}_{j_\alpha}(s,x) u_\alpha^{j_\alpha}(x) \right) J^l_{j_p}(s,x) u_p^{j_p}(x)\right],
\end{align*}
\begin{align*}
\widetilde{H}^{2,\epsilon}_s(x,y) := &\rho^\epsilon(y-\phi_s(x)) \left[\xi^l(s,\phi_s(x)) D_l K_{i_1,\ldots,i_k}(s,y) \prod_{\alpha = 1}^k J^{i_\alpha}_{j_\alpha}(s,x) u_\alpha^{j_\alpha}(x) \right. \\
&\hspace{50pt}\left. \quad + \sum_{p=1}^k K_{i_1,\ldots,i_k}(s,y) D_l \xi^{i_p}(s,\phi_s(x)) \left(\prod_{\alpha \neq p}^k J^{i_\alpha}_{j_\alpha}(s,x) u_\alpha^{j_\alpha}(x) \right) J^l_{j_p}(s,x) u_p^{j_p}(x)\right].
\end{align*}

{\bf Step 3:} Finally, we investigate the convergence of each term in the limit $\epsilon \rightarrow 0$. First, since $K$ is continuous in $y$, we obtain the following limit on the LHS of \eqref{F-eq-int-2}:
\begin{align*}
\int_{\mathbb R^n}\left(F^\epsilon_t(x,y) - F^\epsilon_0(x,y)\right) \diff y \rightarrow \left< \phi^*_t K(t,x),\boldsymbol u(x) \right> - \left<K(0,x),\boldsymbol u(x) \right>,
\end{align*}
as $\epsilon \rightarrow 0$, where $\boldsymbol u(x) = (u_1(x),\ldots,u_k(x)),$ and $\left<K(x),\boldsymbol u(x) \right> := K(x)(u_1(x),\ldots,u_k(x))$ denotes the contraction of tensors. For the terms on the RHS, we apply the dominated convergence theorem to obtain the limit. Using H\"older's inequality and noting that $\|\rho^\epsilon\|_{L^1} = 1$, we derive
\begin{align*}
&\left|\int_{\mathbb R^n} \widetilde{G}^{1,\epsilon}_s(x,y) \diff y\right| \leq \lambda^1_1(s,x) \|G(s,\cdot)\|_{L^\infty_{R_1+\gamma}} + \lambda^1_2(s,x) \|DK(s,\cdot)\|_{L^\infty_{R_1+\gamma}} + \lambda^1_3(s,x) \|D^2K(s,\cdot)\|_{L^\infty_{R_1+\gamma}}, \\
&\left|\int_{\mathbb R^n} \widetilde{G}^{2,\epsilon}_s(x,y) \diff y\right| \leq \lambda^2_1(s,x) \|H(s,\cdot)\|_{L^\infty_{R_1+\gamma}} + \lambda^2_2(s,x) \|DH(s,\cdot)\|_{L^\infty_{R_1+\gamma}},
\end{align*}
for all $\epsilon < 1$, where $\lambda_i^j(s,x)$ are locally integrable in time for $s \in [0,\tau \wedge T]$. Hence, by the dominated convergence theorem, one can show that the bounded variation parts converge as follows
\begin{align*}
\bullet \quad & \int^t_0 \int_{\mathbb R^n} \widetilde{G}^{1,\epsilon}_s(x,y) \diff y \diff s  \\
&\rightarrow \int^t_0 \int_{\mathbb R^n} \left<\phi_s^* G(s,x), \boldsymbol u(x)\right> \diff s + \int^t_0 \left<\phi_s^* \mathcal L_b K(s,x), \boldsymbol u(x)\right> \diff s + \frac12 \int^t_0 \left<\phi_s^* \mathcal L_\xi \mathcal L_\xi K(s,x), \boldsymbol u(x)\right> \diff s, \\
\bullet \quad & \int^t_0 \int_{\mathbb R^n}\widetilde{G}^{2,\epsilon}_s(x,y) \diff y \diff \,[W,B]_s \rightarrow \int^t_0 \left<\phi^*_s \mathcal L_\xi H(s,x), \boldsymbol u(x)\right>\diff \,[W,B]_s,
\end{align*}
where we have taken into account the explicit formulae for Lie derivatives in Lemma \ref{lie-deriv}.
Similarly, we can show that
\begin{align*}
&\left|\int_{\mathbb R^n} \hat{H}^{1,\epsilon}_s(x,y) \diff y\right| \leq \mu^1_1(s,x) \|H(s,\cdot)\|_{L^\infty_{R_1+\gamma}}, \\
&\left|\int_{\mathbb R^n} \widetilde{H}^{2,\epsilon}_s(x,y) \diff y\right| \leq \mu^2_1(s,x) \|K(s,\cdot)\|_{L^\infty_{R_1+\gamma}} + \mu^2_2(s,x) \|DK(s,\cdot)\|_{L^\infty_{R_1+\gamma}},
\end{align*}
where $\mu_i^j(s,x)$ are locally square integrable in time for $s \in [0,\tau \wedge T]$, so by the dominated convergence theorem for It\^o integrals, the martingale terms converge to
\begin{align*}
\bullet \quad & \int^t_0 \int_{\mathbb R^n} \hat{H}^{1,\epsilon}_s(x,y) \diff y \diff W_s \rightarrow \int^t_0 \left<\phi^*_s H(s,x), \boldsymbol u(x) \right> \diff W_s, \\
\bullet \quad & \int^t_0 \int_{\mathbb R^n} \widetilde{H}^{2,\epsilon}_s(x,y) \diff y \diff B_s \rightarrow \int^t_0 \left<\phi_s^* \mathcal L_\xi K(s,x),\boldsymbol u(x) \right>\diff B_s,
\end{align*}
in probability. Since $\boldsymbol u$ was chosen arbitrarily, this proves \eqref{Ito-Wentzell-one-form-Ito-ver} for $t \in [0,\tau \wedge T]$.

\section{Conclusions and outlook for further research}\label{sec-conclusions}

In this paper we have:

\begin{itemize}
\item Proved the Kunita--It\^o--Wentzell (KIW) formula for evaluation of stochastic $k$-forms along stochastic flows. This formula generalises the classic It\^o--Wentzell formula (see \cite{kunita1981some}, \cite{kunita1984stochastic}, \cite{kunita1997stochastic}), as well as the Kunita's It\^o lemma for $k$-forms on $\mathbb R^n$ (shown in \cite{kunita1997stochastic}).

\item Employed the KIW formula in deriving an Euler--Poincar\'e variational principle and a Clebsch constrained Hamilton's principle which each introduce stochastic advection by Lie transport (SALT) into the semidrect-product  continuum equations derived in \cite{holm1998euler} while preserving their Kelvin--Noether theorem and Lie--Poisson Hamiltonian structure.

\item Applied the KIW formula to provide a rigorous derivation of stochastic advection by Lie transport (SALT) equations, continuity equations in fluid dynamics, and Kelvin's circulation Theorem. SALT has been found to be a valuable tool in the modelling of geophysical fluid dynamics, where it enables uncertainty quantification \cite{CoCrHoPaSh2018salt1,CoCrHoPaSh2018salt2} and is expected to lead to uncertainty reduction via data assimilation. It also has been shown to play a similar important role in shape analysis \cite{ArHoSo2018shape1,ArHoSo2018shape2}. All of these results have been developed within the context of \cite{holm2015variational,CoGoHo2017,crisan2017solution}, where the geometric approach for adding SALT to deterministic fluid equations was first introduced, understood and applied.  
\end{itemize}

Some near-term future research directions may include: 
\begin{enumerate}[$\lozenge$]
\item {\bf Tensor fields.} We already know that the KIW formula is valid for vector fields, as well as $k$-forms. Extending the KIW formula to stochastic time-dependent $(r,s)$-tensor fields would provide a basis for deriving the stochastic counterparts of the deterministic transport formulas appearing in  \cite{holm1998euler}, e.g., for nonlinear elasticity.
\item {\bf Stochastic transport on manifolds.} One would expect that the KIW formula for $k$-forms (and more generally, for $(r,s)$ tensor fields) could naturally be extended to manifolds. An extensive literature about stochastic flows on manifolds exists, see. e.g., \cite{ElworthyLeJanLi2007,ElworthyLeJanLi2010}. 
The obstacle in this direction for us is that in our proof, first, one would have to make sense of \eqref{F-eq-full}, where we evaluate the $k$-form and vector fields at different points in space, which may be justified for instance by introducing a connection and taking the parallel transport to the same point. Secondly, our proof is not local since we consider mollifiers and integrate by parts, which may cause difficulty in the manifold case where we can only work locally on charts, unless we have a coordinate-free proof. However, we have good reasons to conjecture that our KIW formula does hold on manifolds, since our final expression \eqref{KIWkformsimplified} is coordinate free and it would also recover Kunita's It\^o-lemma \cite{kunita1981some,kunita1984stochastic} for $k$-forms on manifolds in the deterministic case.

\item  {\bf  A new methodology for uncertainty quantification and reduction.}
The stochastic fluid velocity decomposition results of \cite{holm2015variational} and \cite{CoGoHo2017} show that the principles of transformation theory and multi-time homogenisation comprise the foundations for a physically meaningful, data-driven and mathematically-based  approach for decomposing the fluid transport velocity into its drift and stochastic parts. This approach can be applied immediately to the class of continuum flows whose deterministic motion is based on fundamental variational principles. 

Two related papers \cite{CoCrHoPaSh2018salt2,CoCrHoPaSh2018salt1} have recently employed this approach to develop a new methodology to implement the velocity decomposition of \cite{holm2015variational} and \cite{CoGoHo2017}  for uncertainty quantification in computational simulations of fluid dynamics. The new methodology was tested numerically in these papers and found to be suitable for coarse graining in two separate types of problems based on discretisations using either finite elements, or finite differences. Preliminary results of work in progress show that combining stochastic uncertainty quantification with data assimilation can be very effective in reduction of uncertainty.

We expect that the stochastic modelling approach developed using the KIW formula in the present paper will be tenable whenever a body of hydrodynamic transport data shows the characteristic signal of high power at low frequencies. This characteristic signal is often seen in flows in Nature, such as atmospheric and oceanic geophysical flows. In such flows, the opportunity arises to decompose the corresponding Lagrangian trajectories into fast and slow, or resolvable and unresolvable, components and apply the stochastic modelling approach described here as a basis for quantifying \textit{a priori} uncertainty and then using data assimilation methods (e.g., particle filtering) for reducing uncertainty. 

\end{enumerate}

\subsection*{Acknowledgements.} We are enormously grateful to our friends and colleagues whose advice and patient encouragement have contributed greatly to our understanding of these issues. Among those to whom we are especially grateful for fruitful discussions of the present material, we thank A. Arnaudon, X. Chen, C. J. Cotter, D. Crisan, A. B. Cruzeiro, T. Drivas, F. Flandoli, F. Gay-Balmaz, S. Hochgerner, J.-M. Leahy, X.-M. Li, J. P. Ortega, W. Pan, and T. S. Ratiu.

During this work, ST was supported by the Schr\"odinger scholarship scheme at Imperial College London. ABdL and EL were supported by [grant number EP/L016613/1] and are grateful for warm hospitality at the Imperial College London EPSRC Centre for Doctoral Training in Mathematics of Planet Earth. DH was partially supported by EPSRC Standard  [Grant number  EP/N023781/1], entitled, ``Variational principles for stochastic parameterisations in geophysical fluid dynamics''.

\bibliography{biblio}
\bibliographystyle{alpha}

\end{document}